\newtheorem{theorem}{Theorem}[section]
\newtheorem{corollary}{Corollary}[section]
\newtheorem{lemma}{Lemma}[section]
\newtheorem{proposition}{Proposition}[section]
\begin{document}
\numberwithin{equation}{section}
\title{The $p$-adic Analysis of Stirling Numbers 
via Higher Order Bernoulli Numbers}
\author{Arnold Adelberg\\
Department of Mathematics and Statistics\\
Grinnell College\\
Grinnell, IA 50112}
\maketitle

\vspace{-10mm}

\begin{abstract}
In this paper, we use our previous study of the higher order Bernoulli numbers
$B_n^{(l)}$ to investigate $p$-adic properties of Stirling numbers of the second kind
$S(n,k)$. For example we give a new greatly simplified proof of the formula
$\nu_2(S(2^h,k))=d_2(k)-1$ if $1 \le k \le 2^h$, and generalize this result to arbitrary 
primes $p$. We also consider the Stirling numbers of the first kind
$s(n,k)$, with new results analogous to those for the Stirling numbers of the
second kind. New mod $p$ congruences for Stirling numbers of both kinds are also
given. 

Keywords: Stirling numbers, higher order Bernoulli numbers and polynomials, $p$-adic
analysis, congruences. \textit{MSC[2010]}: 11A07, 11B68, 11B73, 11S05.
\end{abstract}

\section{Introduction}

The starting point of our investigation was the remarkable formula conjectured by 
T. Lengyel [12] that
\begin{align}
\nu_2(S(2^h,k))=\sigma_2(k)-1
\end{align}
if $1 \le k \le 2^h$, where $S(n,k)=$ Stirling number of the second kind, $\nu_2=$2-adic
valuation, and $\sigma_2=$ base $2$ digit sum = number of base $2$ digits.

This formula was conjectured by Lengyel in 1994 and proven by S. De Wannemacker [7] in
2005. De Wannemacker's proof is quite involved. Furthermore the proof appears to be
only suitable for the prime $p=2$. 

We were surprised to observe that De Wannemacker's Theorem is an immediate consequence 
of our previous study of higher order Bernoulli numbers and polynomials, primarily of
the pole structure, which we developed in a series of papers in the nineties [1,2,3,4]. 
The machinery
of these papers is valid for all primes $p$, and enables us to extend De Wannemacker's 
Theorem to odd primes $p$ without additional effort. We also get a significant 
improvement of this theorem, which is new even for $p=2$.

Although the connection between higher order Bernoulli numbers and Stirling numbers
\begin{align}
S(n,k)=\binom{n}{k}B_{n-k}^{(-k)} \quad \text{and} \quad s(n,k)=\binom{n-1}{k-1}
 B_{n-k}^{(n)}
\end{align}
is well known and has been noted in [2, 3], we have not previously pursued this application 
in any depth.

Subsequent to De Wannemacker's proof, Lengyel used the same methods to strengthen his
original conjecture to prove [13] that  
\begin{align}
\nu_2(S(c2^h,k))=\sigma_2(k)-1
\end{align}
if $c\ge 1$ and $1 \le k \le 2^h$. 

We will not prove this stronger result in this paper, but we will prove it in a 
subsequent paper, along with its generalization to arbitrary primes. If  $\sigma_2(c)>1$,
these Stirling numbers do not have the ``minimum zero property," which is the main focus 
of this paper.

In addition, we have found greatly simplified proofs for other important results on 
Stirling numbers of the second kind, e.g. we have a nice proof of the theorem proven
by O-Y. Chan and D. Manna [6, Th. 2.4] that the central Stirling number $S(2k,k)$ is odd if
and only if $k$ is Fibbinary (i.e., the base $2$ representation of $k$ has no consecutive
ones). We also present a generalization valid for all primes $p$, namely we determine when
$p\nmid S(pk,k)$, using a simple analog of the Fibbinary property. We also give a new
mod $p$ congruence for $S(pk,k)$, which contains additional information if $p \ne 2$. 

We have abstracted the role of $2^h$ in De Wannemacker's Theorem to the ``minimum zero
property," and have used this concept to strengthen the result of T. Amdeberhan et al [5], 
conjectured in 2008 and proven by S. Hong et al [11, Th. 3.2] in 2012, that
\begin{align}
\nu_2(S(2^h+1,k+1))=\sigma_2(k)-1 \quad \text{if~} 1 \le k \le 2^h.
\end{align}

This is also generalized to all primes, as well as to all ``minimum zero cases."

In all instances where we have been able to exactly determine $\nu_p(S(n,k))$, we have also
been able to find simple explicit mod $p$ congruences for $\epsilon_p(S(n,k))=
p^{-\nu_p(S(n,k))}S(n,k)$, which is the part of $S(n,k)$ prime to $p$.

In our subsequent paper, we will also consider some cases which are not ``minimum zero
cases."  We have tried to incorporate
enough material in our background section to facilitate this extension.

We also consider the Stirling numbers of the first kind $s(n,k)$. The 
``minimum zero property" now necessitates that $k \le n < kp$ in addition to
$p-1\, |\, n-k$.
We use this property to prove an analog of DeWannemacker's Theorem, that
$\nu_2(s(n,2^h)) = h-\sigma_2(n-1)$ if $2^h \le n < 2^{h+1}$, and we generalize this
result to arbitrary primes. 

Similarly we have an analog of the Hong, Zhao and Zhao result for Stirling numbers of the 
first kind, that $\nu_2(s(n-1,2^h-1))=\nu_2(s(n,2^h))$ if $2^h \le n < 2^{h+1}$, 
which we generalize to all primes and to all ``minimum zero cases."

We have organized this paper so that the new results on the $p$-adic analysis of Stirling
numbers appear in the early sections, with the preliminaries and background in the later
sections. 

\section{$p$-adic analysis of Stirling numbers of the second kind}

Throughout this paper, $p$ = arbitrary prime and $\nu_p$ = exponential $p$-adic
valuation. We say that $r$ has a zero of order $e$ if $\nu_p(r)=e>0$, or a pole of order $e$ 
$\nu_p(r)=-e<0$. If $\nu_p(r)=0$ then $r$ is a unit. If $r\ne 0$, then 
$\epsilon_p(r)=p^{-\nu_p(r)} r$ is the unit part of $r$.

The function $\sigma_p(n)$ = sum of the base $p$ digits of $n$ plays an important role
in this paper. For $p=2$, $\sigma_2(n)$ = the number of base $2$ digits in $n$, which is 
sometimes denoted by $d_2(n)$. Obviously $\sigma_p(pn)=\sigma_p(n)$.

The connection between Stirling numbers of the second kind and higher order Bernoulli
numbers is given by 
\begin{align}
S(n,k) = \binom{n}{k} B_{n-k}^{(-k)}
\end{align}

Using the standard formula (4.3) for $\nu_p\binom{n}{m}$, the estimate of Lemma 5.1
for $\nu_p(B_n^{(l)})$ now translates to

\begin{lemma} $\nu(S(n,k)) \ge \lceil (\sigma(k)-\sigma(n))/(p-1)\rceil$ if $n \ge k$.
\end{lemma}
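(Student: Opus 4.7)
The plan is to apply the $p$-adic valuation $\nu_p$ to the identity $S(n,k) = \binom{n}{k} B_{n-k}^{(-k)}$ and then bound each factor using the two results the author explicitly cites as input. Additivity of $\nu_p$ under products gives
\[
\nu_p(S(n,k)) = \nu_p\!\binom{n}{k} + \nu_p\bigl(B_{n-k}^{(-k)}\bigr),
\]
so it suffices to insert the known lower bound for each summand and add.

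For the first summand I would substitute the Legendre-type formula (4.3), namely $\nu_p\!\binom{n}{k} = (\sigma(k) + \sigma(n-k) - \sigma(n))/(p-1)$, which is legal because $n \ge k$. For the second I would invoke Lemma 5.1 with parameters $m = n-k$ and $l = -k$; the expected shape of that estimate is $\nu_p(B_m^{(l)}) \ge -\sigma(m)/(p-1)$, precisely the bound needed to cancel the $\sigma(n-k)/(p-1)$ produced by the binomial coefficient. The $\sigma(n-k)/(p-1)$ terms then telescope and we are left with
\[
\nu_p(S(n,k)) \ge \frac{\sigma(k)-\sigma(n)}{p-1}.
\]
Because the left side is an integer, the right side may be replaced by its ceiling, delivering the claimed inequality.

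The step that I expect to require the most care is confirming that Lemma 5.1, as formulated for the ``pole structure'' of $B_n^{(l)}$, supplies exactly the bound $-\sigma(n-k)/(p-1)$ when applied with the negative upper order $l=-k$; this is the only non-cosmetic input, and everything else is one line of arithmetic. The hypothesis $n \ge k$ serves a double purpose: it ensures $B_{n-k}^{(-k)}$ is a bona fide higher-order Bernoulli number to which Lemma 5.1 applies, and it makes the digit-sum $\sigma(n-k)$ meaningful. Beyond lining up the parameters correctly, there is no further obstacle, which is consistent with the author's remark that the bound ``translates'' directly from the two cited facts.
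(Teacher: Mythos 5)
Your proposal is correct and matches the paper's (implicit) argument: the paper simply states that Lemma 2.1 "translates" from formula (4.3) and Lemma 5.1 via the identity $S(n,k)=\binom{n}{k}B_{n-k}^{(-k)}$, which is exactly the additivity-and-telescoping computation you carry out. The only cosmetic difference is that Lemma 5.1 actually gives the slightly stronger bound $\nu(B_m^{(l)})\ge -\lfloor\sigma(m)/(p-1)\rfloor$, which implies the form $-\sigma(m)/(p-1)$ you assumed, so your final ceiling step goes through unchanged.
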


This lemma was  proven for $p=2$ by De Wannemacker ([7, Th. 3]). 
The proof he gave is non-trivial, involving 
Stirling number identities and induction, and doesn't appear to extend to odd primes.
Lengyel has proven an estimate for odd primes [13, Theorem 5] that is less precise
and never sharp. Note that since $\nu(S(n,k)) \in \mathbb{N}$, the estimate in
this lemma is equivalent to the estimate $\nu(S(n,k)) \ge (\sigma(k)-\sigma(n))/(p-1)$.

We define the minimum zero case for $S(n,k)$ as one where the general inequality
noted at the end of the preceding paragraph is an equality, namely
\begin{align}
S(n,k) \text{~is a minimum zero case if~} \nu(S(n,k)) = 
 (\sigma(k)-\sigma(n))/(p-1).
\end{align}

The concept of minimum zero directly relates to the concept of maximum pole
for higher order Bernoulli polynomials (5.4), which we introduced in [4]. Combining 
these definitions with the congruence in Proposition 5.1 for the higher order
Bernoulli numbers, we get the followig theorem, which establishes a simple,
effective binomial coefficient criterion.

\begin{theorem} The following are equivalent:
\begin{itemize}
\item[(i)] $S(n,k)$ is a minimum zero case,
\item[(ii)] $B_{n-k}^{(-k)}(x)$ has maximum pole.
\item[(iii)] $r=(n-k)/(p-1) \in \mathbb{N}$ and $p \nmid \binom{-(n+1)}{r}$, i.e.,
$p \nmid \binom{n+r}{r}$. 
\end{itemize}
Furthermore, in the minimum zero case, we have
\[
\epsilon(S(n,k)) \equiv (-1)^r  \epsilon(n!/k!) \binom{n+r}{r} \mod p.
\] 
\end{theorem}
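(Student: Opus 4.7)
The plan is to reduce the three conditions and the congruence to corresponding statements for the higher order Bernoulli polynomial $B_{n-k}^{(-k)}(x)$ via the identity (2.5). Writing $S(n,k) = \binom{n}{k}\,B_{n-k}^{(-k)}$ and taking $p$-adic valuations yields
\[
\nu(S(n,k)) = \nu\binom{n}{k} + \nu(B_{n-k}^{(-k)}).
\]
By Kummer's theorem (formula (4.3)), the first term equals the exact value $(\sigma(k)+\sigma(n-k)-\sigma(n))/(p-1)$, while Lemma 5.1 bounds the second below by $-\sigma(n-k)/(p-1)$. Summing gives precisely the bound of Lemma 2.1, so the minimum zero condition (i) is equivalent to equality $\nu(B_{n-k}^{(-k)}) = -\sigma(n-k)/(p-1)$. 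Referring to the definition (5.4), I would check that maximum pole for the polynomial $B_{n-k}^{(-k)}(x)$ is equivalent to maximum pole of its constant term $B_{n-k}^{(-k)}$, which delivers (i) $\Leftrightarrow$ (ii).

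For (ii) $\Leftrightarrow$ (iii), I would feed the maximum pole condition through Proposition 5.1. The very definition of maximum pole forces the index $n-k$ to be a multiple of $p-1$, so $r = (n-k)/(p-1) \in \mathbb{N}$ is immediate. Proposition 5.1 then characterizes the maximum pole case by the non-vanishing mod $p$ of a specific binomial coefficient, which for parameter $-k$ and index $r(p-1)$ I would identify (after a short calculation) as $\binom{-(n+1)}{r}$. The elementary identity $\binom{-(n+1)}{r} = (-1)^r \binom{n+r}{r}$ then gives the second form of (iii).

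Finally, for the congruence, I would start with the explicit mod $p$ expression for $\epsilon(B_{n-k}^{(-k)})$ that Proposition 5.1 provides in the maximum pole case. Combining with $\epsilon(S(n,k)) = \epsilon\!\binom{n}{k} \cdot \epsilon(B_{n-k}^{(-k)})$ and the factorial identity $\binom{n}{k}(n-k)! = n!/k!$ should rearrange the unit parts into $\epsilon(n!/k!)$ times the binomial factor, with the $(-1)^r$ emerging from the sign conversion above together with Wilson's theorem applied to the $r$ groups of $(p-1)!$ hidden inside $(n-k)!$. The hard part here, and the step I expect to require the most careful bookkeeping, is making sure that the Wilson-type sign contributions from $(p-1)!^r$ and those built into the normalization in Proposition 5.1 cancel to produce exactly one factor of $(-1)^r$, not $(-1)^{2r}$ or nothing; aside from this sign chase, the argument is a direct unwinding of the earlier machinery.
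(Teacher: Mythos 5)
Your proposal follows the paper's own route exactly: the theorem is obtained by combining the definition (2.6) of minimum zero with the definition (5.4) of maximum pole, the criterion (5.5), and the congruence of Proposition 5.1, using $\nu(S(n,k))=\nu\binom{n}{k}+\nu(B_{n-k}^{(-k)})$ together with the exact formula (4.3) for $\nu\binom{n}{k}$, and the specialization $l=-k$, $n\mapsto n-k$ giving $\binom{l-n-1}{r}=\binom{-(n+1)}{r}=(-1)^r\binom{n+r}{r}$. The one correction: the Wilson-type sign chase you anticipate in the last step does not arise, because the congruence (5.6) is already normalized by the factorial --- writing $S(n,k)=(n!/k!)\cdot B_{n-k}^{(-k)}/(n-k)!$ and noting $\nu\bigl(B_{n-k}^{(-k)}/(n-k)!\bigr)=-r$ in the maximum pole case, the quantity $p^rB_{n-k}^{(-k)}/(n-k)!$ is exactly $\epsilon(B_{n-k}^{(-k)})/\epsilon((n-k)!)$, so the single factor $(-1)^r$ comes entirely from the negative-binomial identity (4.9) and no contribution from $(p-1)!^r$ ever appears.
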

Remarks. Since the classical theorems 
are all $p=2$ theorems, it is worth noting what this theorem says for $p=2$. In this
case, (iii) simply says $\binom{n+r}{r}=\binom{n+n-k}{n}$ is odd, i.e., that $n$ and
$n-k$ have no common base $2$ digits.

\begin{corollary} 
$S(n,k)$ is a minimum zero case if and only if $S(np,kp)$ is a minimum zero case.
Furthermore, if $S(n,k)$ is a minimum zero case, then $\nu(S(n,k) =\nu(S(np,kp))$
and $\epsilon(S(n,k)) \equiv \epsilon(S(np,kp)) \mod p$. 
\end{corollary}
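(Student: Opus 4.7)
The plan is to reduce the entire statement to condition (iii) of Theorem 2.1 together with the explicit mod $p$ formula for $\epsilon(S(n,k))$ given there, plus two very standard $p$-adic facts: the Lucas/Kummer congruence $\binom{pa}{pb}\equiv\binom{a}{b}\pmod p$ and a Wilson-type congruence for factorials.

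\medskip

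\noindent First I would set $r'=(np-kp)/(p-1)$ and observe $r'=pr$. Hence $r'\in\mathbb{N}$ iff $r\in\mathbb{N}$, and
\[
\binom{np+r'}{r'}=\binom{p(n+r)}{pr}\equiv\binom{n+r}{r}\pmod p
\]
by Lucas. Thus condition (iii) of Theorem 2.1 holds for the pair $(n,k)$ iff it holds for $(np,kp)$, giving the claimed equivalence of the minimum zero property. The equality of valuations is then immediate from the minimum zero formula $\nu(S(n,k))=(\sigma(k)-\sigma(n))/(p-1)$ combined with the digit-sum identity $\sigma(mp)=\sigma(m)$.

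\medskip

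\noindent For the congruence on unit parts, I would apply the formula of Theorem 2.1 to $(np,kp)$,
\[
\epsilon(S(np,kp))\equiv(-1)^{pr}\,\epsilon\!\left(\tfrac{(np)!}{(kp)!}\right)\binom{np+pr}{pr}\pmod p,
\]
and compare each factor with the corresponding factor in the formula for $\epsilon(S(n,k))$. The binomial factor matches by the Lucas step above. The sign satisfies $(-1)^{pr}\equiv(-1)^r\pmod p$ for every prime (either $p$ is odd, or $p=2$ and signs are trivial mod $2$). The one step that requires a small lemma is the factorial ratio: writing $(mp)!$ as the product of the $m$ multiples of $p$ in $[1,mp]$, which contribute $p^m\,m!$, times $m$ complete blocks of non-multiples, each congruent to $(p-1)!\equiv-1$ by Wilson, one gets
\[
\epsilon((mp)!)\equiv(-1)^m\epsilon(m!)\pmod p,
\]
and applying this to $m=n$ and $m=k$ yields $\epsilon((np)!/(kp)!)\equiv(-1)^{n-k}\epsilon(n!/k!)\pmod p$. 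Collecting signs leaves $(-1)^{pr+n-k}$; since $n-k=(p-1)r$, this exponent is $2(p-1)r$, which is even, so the sign reduces to $(-1)^r$, matching the formula for $\epsilon(S(n,k))$.

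\medskip

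\noindent There is no real obstacle here: the corollary is essentially the observation that multiplication by $p$ acts trivially on all three ingredients of the criterion (the divisibility of $r$, the binomial $\binom{n+r}{r}$ mod $p$, and the unit part of $n!/k!$ mod $p$), with the only mildly non-obvious point being the Wilson-type congruence for $\epsilon((mp)!)$.
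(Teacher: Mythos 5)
Your proposal is correct and follows exactly the route the paper intends: the paper states this corollary without proof as an immediate consequence of Theorem 2.1(iii) together with the facts it records as (4.8) ($\nu_p\binom{np}{mp}=\nu_p\binom{n}{m}$ and $\epsilon_p\binom{np}{mp}\equiv\epsilon_p\binom{n}{m}$) and (4.11) ($\epsilon_p((pk)!)\equiv(-1)^k\epsilon_p(k!)$), which are precisely the Lucas step and the Wilson-type factorial congruence you invoke. One cosmetic slip: the exponent $pr+n-k$ equals $(2p-1)r$, not $2(p-1)r$; what is $2(p-1)r$ (hence even) is the difference $pr+n-k-r$, which is all you need to conclude $(-1)^{pr+n-k}=(-1)^r$, so the argument stands.
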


\begin{corollary}With the same notations as in the theorem, 
if $\sigma(k)=\sigma(n)$ then 
\[
S(n,k) \equiv (-1)^r \epsilon(n!/k!)\binom{n+r}{r} \mod p.
\]
\end{corollary}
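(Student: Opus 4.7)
The plan is to deduce Corollary 2.2 directly from Theorem 2.1 by splitting on whether the minimum zero property holds. The first step is a preparatory observation: since $m \equiv \sigma_p(m) \pmod{p-1}$ for every nonnegative integer $m$, the hypothesis $\sigma(k)=\sigma(n)$ forces $n-k \equiv 0 \pmod{p-1}$, so $r=(n-k)/(p-1)$ is a nonnegative integer (using $n \ge k$). This puts us in the regime where clause (iii) of Theorem 2.1 is applicable, in particular the binomial coefficient $\binom{n+r}{r}$ on the right-hand side of the desired congruence makes sense and Theorem 2.1's equivalences apply.

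Case 1: $S(n,k)$ is a minimum zero case. Because $\sigma(k)=\sigma(n)$, the minimum zero identity reads $\nu(S(n,k))=0$, so $S(n,k)$ is itself a $p$-adic unit and $\epsilon(S(n,k))=S(n,k)$. The final displayed congruence in Theorem 2.1 then gives
\[
S(n,k)=\epsilon(S(n,k)) \equiv (-1)^r \epsilon(n!/k!)\binom{n+r}{r} \pmod p,
\]
exactly the desired statement.

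Case 2: $S(n,k)$ is not a minimum zero case. Since $r \in \mathbb{N}$, the equivalence of (i) and (iii) in Theorem 2.1 implies $p \mid \binom{n+r}{r}$, so the right-hand side of the corollary is congruent to $0 \bmod p$. On the other hand, Lemma 2.1 gives $\nu(S(n,k)) \ge 0$, and failure of minimum zero with $\sigma(k)=\sigma(n)$ means strict inequality, hence $\nu(S(n,k)) \ge 1$ and $S(n,k) \equiv 0 \pmod p$ as well.

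There is really no hard step here; the corollary is a pure bookkeeping consequence of Theorem 2.1 once one realizes that the hypothesis $\sigma(k)=\sigma(n)$ forces $r \in \mathbb{N}$ and collapses the valuation in the minimum zero identity to zero. The only mild subtlety is the case analysis on whether $p \mid \binom{n+r}{r}$, which is handled uniformly because both sides of the asserted congruence vanish modulo $p$ exactly when minimum zero fails.
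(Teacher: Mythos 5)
Your proposal is correct and matches the paper's intended derivation: the paper leaves the proof implicit, but the Remark immediately following the corollary ($p \mid S(n,k)$ iff $p \mid \binom{n+r}{r}$ iff $S(n,k)$ is not a minimum zero case) spells out exactly your case split. Your observation that $\sigma(k)=\sigma(n)$ forces $r=(n-k)/(p-1)\in\mathbb{N}$ via $m\equiv\sigma_p(m)\pmod{p-1}$, and that the minimum zero condition then collapses to $\nu(S(n,k))=0$ so $\epsilon(S(n,k))=S(n,k)$, is precisely the bookkeeping the author intends.
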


Remark. This corollary implies that if $\sigma(k)=\sigma(n)$ and $r=(n-k)/(p-1)$, 
then $p | S(n,k)$ if and only if $p|\binom{n+r}{r}$, i.e. if and only if $S(n,k)$ is not
a minimum  zero case. 

We can now easily prove an analog of De Wannemacker's Theorem valid for all primes $p$. 
The following theorem has De Wannemacker's result as the special case for $p=2$.
Even for $p=2$, the proof is much simpler than any proofs in the literature which
we know. 

\begin{theorem}Let $n=ap^h$ with $1 \le a \le p-1$ and assume that $1 \le k \le n$
and $p-1 | n-k$. Then $S(n,k)$ is a minimum zero case and
\[
\nu(S(n,k)) = \frac{\sigma(k)-\sigma(n)}{p-1} = \frac{\sigma(k)-a}{p-1}.
\]
\end{theorem}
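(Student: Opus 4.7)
The plan is to invoke Theorem 2.1(iii) to show that $S(n,k)$ is a minimum zero case; the valuation formula will then drop out of the definition. Since $n = ap^h$ with $1 \le a \le p-1$, the base-$p$ expansion of $n$ has a single nonzero digit equal to $a$ at position $h$, so $\sigma(n) = a$, which already justifies the second equality in the statement. The hypothesis $p-1 \mid n-k$ makes $r = (n-k)/(p-1)$ a nonnegative integer, so the first half of condition (iii) is automatic, and everything reduces to checking $p \nmid \binom{n+r}{r}$.

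Next I would reformulate this via Kummer's theorem as the assertion that adding $n$ and $r$ in base $p$ produces no carries, and bound $r$ accordingly. Since $p \equiv 1 \pmod{p-1}$, we have $n \equiv a \pmod{p-1}$, so the constraint $p-1 \mid n-k$ combined with $k \ge 1$ and $a \in \{1,\ldots,p-1\}$ forces $k \ge a$. Therefore
\[
r \;=\; \frac{n-k}{p-1} \;\le\; \frac{ap^h - a}{p-1} \;=\; a\cdot\frac{p^h-1}{p-1} \;\le\; p^h - 1,
\]
so every base-$p$ digit of $r$ lies strictly below position $h$.

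Since $n$ contributes only the digit $a$ at position $h$ and $r$ contributes nothing there or higher, the digits of $n$ and $r$ never overlap and the base-$p$ addition has no carries at all. Kummer's theorem then gives $p \nmid \binom{n+r}{r}$, so condition (iii) of Theorem 2.1 is satisfied and $S(n,k)$ is a minimum zero case. By the definition of minimum zero,
\[
\nu(S(n,k)) \;=\; \frac{\sigma(k) - \sigma(n)}{p-1} \;=\; \frac{\sigma(k) - a}{p-1}.
\]

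The only real step is the bound $r < p^h$, which is essentially the arithmetic heart of the argument: it depends on the forced inequality $k \ge a$ coming from the divisibility constraint on $n-k$. Once this is in hand, the special shape of $n$ (a single base-$p$ digit) makes the no-carries check immediate, and the rest of the theorem is a direct translation from the binomial criterion of Theorem 2.1.
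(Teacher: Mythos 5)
Your proposal is correct and takes essentially the same route as the paper: both verify condition (iii) of Theorem 2.1 by bounding $r=(n-k)/(p-1)<p^h$, so that the base-$p$ digits of $n=ap^h$ and $r$ are disjoint and Lucas (equivalently Kummer, via the carry count) gives $p\nmid\binom{n+r}{r}$. The paper merely asserts $r<p^h$; your derivation via $k\ge a$ just supplies the small arithmetic step the paper leaves implicit.
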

\begin{proof} If $r =(n-k)/(p-1)$ then $r < p^h$, so $p \nmid \binom{n+r}{r}$
by the Lucas Theorem, and so we have the minimum zero
case by the preceding theorem, giving the equations of Theorem 2.2.
\end{proof}

\begin{corollary}With the same assumptions, we have 
\begin{align}
\epsilon(S(n,k)) \equiv (-1)^{r+ah}a!/ \epsilon(k!) \mod p. \notag
\end{align}
\end{corollary}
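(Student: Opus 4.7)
The plan is to start from the congruence provided by Theorem 2.2, namely $\epsilon(S(n,k)) \equiv (-1)^r \epsilon(n!/k!) \binom{n+r}{r} \pmod p$, which applies because Theorem 2.4 has just verified that we are in the minimum zero case. From here I would simplify the factors $\binom{n+r}{r}$ and $\epsilon(n!)$ separately, and then reassemble.

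For the binomial coefficient, I would invoke Lucas' theorem. Since $r < p^h$ (as noted in the proof of Theorem 2.4) and $n = ap^h$ has only a single nonzero base-$p$ digit, equal to $a$ in position $h$, the digits of $r$ all sit in positions $0, \ldots, h-1$. Lucas then gives $\binom{n+r}{r} = \binom{ap^h+r}{r} \equiv \binom{a}{0}\prod_{i<h}\binom{r_i}{r_i} \equiv 1 \pmod p$ with essentially no computation.

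For $\epsilon(n!)$, I would use the standard Wilson-style descent: separating $(ap^h)!$ into multiples and non-multiples of $p$, the multiples contribute $p^{ap^{h-1}}(ap^{h-1})!$, while the non-multiples break into $ap^{h-1}$ consecutive length-$p$ blocks, each contributing $(p-1)! \equiv -1 \pmod p$ by Wilson. This yields the recursion $\epsilon((ap^h)!) \equiv (-1)^{ap^{h-1}} \epsilon((ap^{h-1})!) \pmod p$, which upon iteration down to $\epsilon(a!) = a!$ gives $\epsilon(n!) \equiv (-1)^{a(1+p+\cdots+p^{h-1})}\,a! \pmod p$.

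The only real subtlety is reducing the exponent mod $2$ to $ah$: for odd $p$, each $p^i$ is odd so $1+p+\cdots+p^{h-1} \equiv h \pmod 2$; for $p=2$ the sign is vacuous mod $2$, so $(-1)^{ah}$ works trivially. Substituting $\binom{n+r}{r} \equiv 1$ and $\epsilon(n!) \equiv (-1)^{ah}a!$ into the congruence of Theorem 2.2, and using $\epsilon(n!/k!) = \epsilon(n!)/\epsilon(k!)$, I recover $\epsilon(S(n,k)) \equiv (-1)^{r+ah}\,a!/\epsilon(k!) \pmod p$, as desired. The main potential obstacle is bookkeeping this parity, but once the Wilson descent is set up it collapses cleanly.
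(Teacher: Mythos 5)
Your proposal is correct and follows essentially the same route as the paper: apply the minimum-zero-case congruence of Theorem 2.1, note $\binom{n+r}{r}\equiv 1 \pmod p$ by Lucas since $r<p^h$ makes the base-$p$ digits of $r$ and $n=ap^h$ disjoint, and reduce $\epsilon((ap^h)!)$ to $(-1)^{ah}a!$. The only difference is that you prove the factorial congruence by the Wilson-style descent (with the correct parity reduction of $a(1+p+\cdots+p^{h-1})$ to $ah$), whereas the paper simply cites it as Lemma 4.1.
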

\begin{proof} We have the minimum zero case by Theorem 2.2, and $\binom{n+r}{r} \equiv
1$ mod $p$ since $r$ and $n$ have disjoint base $p$ representations. Finally,
the standard Lemma 4.1 congruence $\epsilon((ap^h)!) \equiv (-1)^{ah} a! \mod p$
and the congruence in Theorem 2.1 give the desired result. 
\end{proof}
The next theorem shows that the minimum zero Stirling numbers of the second kind
have certain invariance properties.

\begin{theorem} Let $S(n,k)$ be a minimum zero case and $0 \le b < 
\min\{p^{\nu(k)},p^{\nu(n)}\}$. 
Let $n'=n+b$ and $k'=k+b$. Then $S(n',k')$ is a minimum zero case and
\begin{itemize}
\item[(i)] $\nu(S(n',k')) =\nu(S(n,k))$.
\item[(ii)] $\epsilon(S(n',k')) \equiv \epsilon(S(n,k)) \mod p$.
\end{itemize}
\end{theorem}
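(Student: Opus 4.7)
The plan is to verify the binomial-coefficient criterion of Theorem 2.1(iii) for the pair $(n',k')$ and then to invoke the $\epsilon$-congruence there to compare unit parts. Set $e = \min\{\nu(n),\nu(k)\}$, so that $b < p^e$. Since $\nu(p-1)=0$, the integer $r = (n-k)/(p-1)$ satisfies $\nu(r) = \nu(n-k) \ge e$, and therefore $r$, $n$, and $k$ all have zero digits in positions $i < e$, while $b$ has nonzero digits only in positions $i < e$. The additions $n+b$ and $k+b$ thus carry no base-$p$ digits, so $\sigma(n') = \sigma(n) + \sigma(b)$ and $\sigma(k') = \sigma(k) + \sigma(b)$, which already yields $(\sigma(k') - \sigma(n'))/(p-1) = \nu(S(n,k))$.

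First I would check criterion (iii) for $(n',k')$. The minimum-zero hypothesis on $(n,k)$ says $p \nmid \binom{n+r}{r}$, equivalently that $n+r$ is computed without carries, so $(n+r)_i = n_i + r_i$ and in particular $(n+r)_i = 0$ for $i < e$. Adding $b$ to $n+r$ again introduces no carries (below position $e$ only $b_i$ is nonzero; at or above $e$ only $(n+r)_i$ is), so Lucas's theorem gives $\binom{n'+r}{r} \equiv \binom{n+r}{r} \not\equiv 0 \pmod p$. By Theorem 2.1 this establishes that $S(n',k')$ is a minimum-zero case, and combined with the $\sigma$-identity above it gives $\nu(S(n',k')) = \nu(S(n,k))$, settling (i).

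For (ii) I would apply the $\epsilon$-congruence of Theorem 2.1 to both $S(n,k)$ and $S(n',k')$: the $(-1)^r$ factors match and the binomial factors agree mod $p$ by the computation just made, so the task reduces to showing $\epsilon(n'!/k'!) \equiv \epsilon(n!/k!) \pmod p$. Invoking the formula from Lemma 4.1, which for odd $p$ reads $\epsilon(N!) \equiv (-1)^{\nu(N!)} \prod_i N_i! \pmod p$ in terms of the base-$p$ digits of $N$ (and is trivial mod $2$), I would observe that at positions $i < e$ the digits of $n,k$ are zero while those of $n',k'$ are both $b_i$, so the factorial contributions cancel in the ratios $\prod_i n_i!/k_i!$ and $\prod_i n'_i!/k'_i!$; at positions $i \ge e$ the primed and unprimed digits coincide. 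Since $\nu(n!/k!) = \nu(n'!/k'!)$ by Legendre applied to the $\sigma$-identity, the two unit parts agree modulo $p$, proving (ii). The crux is precisely this digit cancellation for $\epsilon(n!/k!)$: tracking the $p$-adic valuations of the individual factors $j$ versus $j+b$ across $[k+1,n]$ directly would be delicate in the stratum $\nu(j) = \nu(b)$, where $\nu(j+b)$ can jump unpredictably, but the base-$p$ digit expansion of $\epsilon(N!)$ bypasses this entirely.
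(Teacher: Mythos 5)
Your argument is correct and its skeleton coincides with the paper's: you verify Theorem 2.1(iii) for $(n',k')$ by noting that $b$ occupies base-$p$ digit positions disjoint from those of $n$, $k$, $r$ and $n+r$, so that $\binom{n'+r}{r}\equiv\binom{n+r}{r}\pmod p$ by Lucas, and then you push the congruence of Theorem 2.1 through the identity $\sigma(k')-\sigma(n')=\sigma(k)-\sigma(n)$. The one place you genuinely diverge is in proving $\epsilon(n'!/k'!)\equiv\epsilon(n!/k!)\pmod p$. The paper writes $(n'!/k'!)/(n!/k!)=\binom{n'}{n}/\binom{k'}{k}$ and observes that digit-disjointness of $n$ with $b$ and of $k$ with $b$ forces both binomial coefficients to be $\equiv 1\pmod p$ by (4.6); you instead expand $\epsilon(N!)\equiv(-1)^{\nu(N!)}\prod_i N_i!\pmod p$ and cancel digit by digit. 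Your route does work --- the sign exponents also cancel, since $\nu(n'!)-\nu(k'!)=\nu(n!)-\nu(k!)$ by Legendre's formula --- and your closing remark is apt: a naive factor-by-factor comparison of $j$ with $j+b$ over $[k+1,n]$ would indeed be awkward. But be aware that the formula you attribute to Lemma 4.1 is strictly stronger than what that lemma states (it only treats $N=ap^h$); the general digit-product congruence for $\epsilon(N!)$ is the standard Anton--Legendre extension of Wilson's theorem, which underlies the paper's (4.7) but is not proved in the paper, so you should either prove it or cite it explicitly. The paper's binomial-ratio trick is slightly cleaner precisely because it sidesteps the need for that general formula, using only the Lucas congruence already recorded in (4.6).
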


\begin{proof} First observe that $b$ is a common bottom segment of the base $p$
representations of $n'$ and $k'$, and $n$ and $k$ are the respective top segments.
We have $n'-k'=n-k$, so $r=(n'-k')/(p-1) = (n-k)/(p-1)$.
Since the base $p$ representations of $n$, $k$, and $n+r$ are all disjoint from the
representation of $b$, we have $\binom{n'+r}{r} \equiv \binom{n+r}{r} 
\mod p$ by the Lucas congruence. Hence, $S(n',k')$ is also a minimum zero case.
Since  $\sigma(k')-\sigma(n')= \sigma(k)-\sigma(n)$, part (i) is now established.

For part (ii) consider
\[
(n'!/k'!) /(n!/k!) = \frac{n'!}{n! b!}\bigg/ \frac{k'!}{k! b!} = \binom{n'}{n}\bigg/ \binom{k'}{k}.
\]
But now the disjointness of $n$ and $b$ implies that $\binom{n'}{n} \equiv 1 \mod p$,
and similarly the disjointness of $k$ and $b$ implies that $\binom{k'}{k} \equiv 1
\mod p$. Hence $\epsilon(n'!/k'!) \equiv \epsilon(n!/k!) \mod p$, so by the congruence in 
Theorem 2.1, we have $\epsilon(S(n',k')) \equiv \epsilon(S(n,k)) \mod p$. 
\end{proof}

The following corollary, which stregthens DeWannemacker's Theorem, is a special case
of Theorem 2.3.

\begin{corollary} Let $n=ap^h$ with $1 \le a \le p-1$, and assume that $1 \le k\le n$
and $p-1 | n-k$. Let $n'=n+b$ and $k'=k+b$, where $0 \le b < p^{\nu(k)}$. Then
$S(n',k')$ is a minimum zero case and
\begin{itemize}
\item[(i)] $\nu(S(n,k)) = \nu(S(n',k'))=(\sigma(k)-a)/(p-1)$.
\item[(ii)] $\epsilon(S(n,k))\equiv \epsilon(S(n',k')) \mod p$.
\end{itemize}
\end{corollary}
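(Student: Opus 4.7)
The plan is to recognize this corollary as a direct combination of Theorem 2.2 (the generalized DeWannemacker theorem) applied at $(n,k)$ and Theorem 2.3 (the invariance theorem) applied to the offset $b$. Theorem 2.2 covers the base case verbatim: since $n=ap^h$ with $1\le a\le p-1$ and $p-1\mid n-k$, it immediately gives that $S(n,k)$ is a minimum zero case with $\nu(S(n,k)) = (\sigma(k)-a)/(p-1)$. The remaining work is purely to verify that Theorem 2.3 is applicable with the single bound $b<p^{\nu(k)}$ that is assumed here, rather than the slightly stronger joint bound $b<\min\{p^{\nu(k)},p^{\nu(n)}\}$ that Theorem 2.3 requires.

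To reconcile those two bounds, I would compute the relevant valuations for the restricted form $n=ap^h$. Since $\gcd(a,p)=1$, we have $\nu(n)=h$. For $\nu(k)$, the hypothesis $k\le n = ap^h < p^{h+1}$ forces $\nu(k)\le h$, since any $k$ divisible by $p^{h+1}$ would exceed $n$. Hence $p^{\nu(k)}\le p^{\nu(n)}$ and $\min\{p^{\nu(k)},p^{\nu(n)}\}=p^{\nu(k)}$, so the corollary's assumption $b<p^{\nu(k)}$ is exactly the hypothesis demanded by Theorem 2.3.

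With both hypotheses in place, I would invoke Theorem 2.3 to conclude that $S(n',k')$ is also a minimum zero case, that $\nu(S(n',k'))=\nu(S(n,k))$, and that $\epsilon(S(n',k'))\equiv \epsilon(S(n,k))\bmod p$. Combined with the explicit value of $\nu(S(n,k))$ from Theorem 2.2, this yields both (i) and (ii). There is essentially no obstacle beyond the one-line valuation estimate $\nu(k)\le h$; the rest is a direct citation of the two prior results.
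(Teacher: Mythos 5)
Your proposal is correct and follows exactly the route the paper intends: the paper states this corollary as a special case of Theorem 2.3 (with the minimum zero hypothesis supplied by Theorem 2.2) and gives no further proof. Your added verification that $k\le n=ap^h<p^{h+1}$ forces $\nu(k)\le h=\nu(n)$, so that $b<p^{\nu(k)}$ already implies $b<\min\{p^{\nu(k)},p^{\nu(n)}\}$, is precisely the small detail the paper leaves implicit, and it is carried out correctly.
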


Next we consider the central Stirling numbers $S(2k,k)$, which are close relatives of 
the Catalan numbers, and are significant for combinatorics. In [6, Th. 2.4], O-Y Chan and
D. Manna showed in a non-trivial way that $S(2k,k)$ is odd if and only if $k$ is
Fibbinary, i.e., if the base $2$ representation of $k$ has no consecutive ones. We give
a short proof of this theorem, generalized to all primes $p$.
The proof given by Chan and Manna for $p=2$, considers many parity cases. 

To generalize to arbitrary primes $p$, define $S(pk,k)$ as a $p$-central Stirling number 
and $k$ as $p$-Fibbinary if the sum of any two consecutive digits of the base $p$
representation of $k$ is at most $p-1$. These concepts clearly specialize to central
Stirling number and Fibbinary number for $p=2$. 

\begin{theorem} $p \nmid S(pk,k)$ if and only if $k$ is $p$-Fibbinary. \end{theorem}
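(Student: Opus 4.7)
The plan is to reduce the question to a congruence for a single binomial coefficient by invoking Theorem 2.1, and then recognize the resulting binomial condition as precisely the $p$-Fibbinary condition through Kummer's theorem.

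First I would set $n = pk$ in the framework of Theorem 2.1. Since $\sigma_p(pk) = \sigma_p(k)$, the general lower bound of Lemma 2.1 gives $\nu_p(S(pk,k)) \ge 0$, which is vacuous; the minimum zero case here is exactly the statement that $p \nmid S(pk,k)$. Next I would compute $r = (n-k)/(p-1) = (pk-k)/(p-1) = k$, which automatically lies in $\mathbb{N}$, so clause (iii) of Theorem 2.1 reduces to the single divisibility condition
\[
p \nmid \binom{n+r}{r} = \binom{(p+1)k}{k}.
\]
Thus $p \nmid S(pk,k)$ if and only if $p \nmid \binom{(p+1)k}{k}$.

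The remaining step is to translate this divisibility into the digit condition on $k$. Writing $k = \sum_i k_i p^i$ in base $p$, we have $pk = \sum_i k_i p^{i+1}$, so the $j$-th digit of $pk$ is $k_{j-1}$ (with the convention $k_{-1} = 0$). Adding $k$ to $pk$ column-by-column in base $p$, the tentative $j$-th digit is $k_j + k_{j-1}$, and a carry occurs in column $j$ precisely when $k_j + k_{j-1} \ge p$. By Kummer's theorem, $\nu_p\binom{(p+1)k}{k}$ equals the number of carries when adding $pk$ and $k$ in base $p$, so $p \nmid \binom{(p+1)k}{k}$ if and only if $k_j + k_{j-1} \le p-1$ for every $j$. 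This is exactly the $p$-Fibbinary condition, completing the equivalence.

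No serious obstacle is expected: once Theorem 2.1 is in hand, the whole argument hinges on the observation $r = k$ together with the standard Kummer/Lucas analysis of $\binom{(p+1)k}{k}$. The only point requiring a small amount of care is making sure the endpoint digit (the top digit of $k$, whose "next" digit is $0$) and the bottom digit (where $k_{-1}=0$) are handled correctly, but in both cases the carry condition $k_j + k_{j-1} \le p-1$ is automatically satisfied when one of the summands is zero.
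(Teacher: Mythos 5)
Your proposal is correct and follows essentially the same route as the paper: set $n=pk$, observe $\sigma(pk)=\sigma(k)$ and $r=k$, reduce via Theorem 2.1(iii) to $p\nmid\binom{(p+1)k}{k}$, and then read off the $p$-Fibbinary condition from the base-$p$ carry/Lucas analysis. One tiny imprecision: the per-column claim that a carry occurs in column $j$ exactly when $k_j+k_{j-1}\ge p$ ignores carry propagation (a carry-in can force a carry even when $k_j+k_{j-1}=p-1$), but the equivalence you actually need — that some carry occurs iff some $j$ has $k_j+k_{j-1}\ge p$ — survives by looking at the least such $j$, so the argument stands.
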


\begin{proof} Since if $n=pk$, then $r=(n-k)/(p-1)=k$ and $\sigma_p(n)=\sigma_p(k)$.
Hence $S(pk,k)$ is a minimum zero case iff $\nu(S(pk,k))=0$, so $p \nmid S(pk,k)$
iff $p\nmid \binom{pk+k}{k}$. But by Lucas' Theorem this is equivalent to the
$p$-Fibbinary condition for $k$. 
\end{proof}

\begin{corollary}
If $k = \sum_i a_ip^i$ is the base $p$ representation, then 
\[
S(pk,k) \equiv \prod_i \binom{a_i+a_{i+1}}{a_i} \mod p.
\]
\end{corollary}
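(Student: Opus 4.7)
The plan is to apply Theorem 2.2 directly to the pair $(n,k) = (pk,k)$, for which $r = (pk-k)/(p-1) = k$ and $\sigma(n) = \sigma(k)$. Accordingly I would split into two cases based on whether $k$ is $p$-Fibbinary, since by Theorem 2.5 the minimum zero property for $S(pk,k)$ is precisely the $p$-Fibbinary condition.

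In the non-$p$-Fibbinary case, Theorem 2.5 gives $p \mid S(pk,k)$, so the left-hand side is $0 \bmod p$. For the right-hand side, I would note that if $a_i + a_{i+1} \ge p$ for some $i$, then writing $a_i + a_{i+1} = p + c$ with $0 \le c \le a_i - 1 < p$ and applying Lucas to $\binom{p+c}{a_i}$ forces $\binom{a_i+a_{i+1}}{a_i} \equiv \binom{c}{a_i} \equiv 0 \pmod p$, so the product vanishes. Thus both sides agree trivially.

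When $k$ is $p$-Fibbinary, $\sigma(pk) = \sigma(k)$ forces $\nu(S(pk,k)) = 0$, hence $\epsilon(S(pk,k)) = S(pk,k)$, and the minimum zero congruence from Theorem 2.2 reads
\[
S(pk,k) \equiv (-1)^k \, \epsilon\bigl((pk)!/k!\bigr) \, \binom{pk+k}{k} \pmod p.
\]
I would evaluate the three factors in turn. For the binomial, the $p$-Fibbinary hypothesis says exactly that the addition $pk+k$ carries nowhere, so the base-$p$ digits of $pk+k$ are $(a_0,\, a_0+a_1,\, a_1+a_2,\, \ldots)$, and Lucas (after the obvious reindexing, with $a_{-1}=0$) gives $\binom{pk+k}{k} \equiv \prod_i \binom{a_i+a_{i+1}}{a_i} \pmod p$. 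For the factorial ratio, I would peel off the multiples of $p$ in $(pk)!$ to get $(pk)!/k! = p^k \prod_{j=1,\; p\nmid j}^{pk} j$, then apply Wilson's theorem to each of the $k$ blocks $\{lp+1,\ldots,lp+p-1\}$ to obtain $\epsilon((pk)!/k!) \equiv (-1)^k \pmod p$.

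The two factors of $(-1)^k$ cancel and the corollary falls out. I do not anticipate a real obstacle: the work is a clean readout of Theorem 2.2 for this specific pair, combined with Lucas and Wilson. The only point requiring care is the digit bookkeeping for $pk+k$, and the verification that the non-$p$-Fibbinary case is automatically absorbed into the same formula.
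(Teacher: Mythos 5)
Your argument is correct and follows essentially the same route as the paper: the minimum-zero congruence of Theorem 2.1 (which you cite as 2.2; likewise the $p$-Fibbinary criterion is Theorem 2.4, not 2.5), the Lucas congruence for $\binom{pk+k}{k}$, and $\epsilon((pk)!/k!)\equiv(-1)^k \bmod p$, which is (4.11). The only real difference is that your explicit case split on the $p$-Fibbinary property is unnecessary, since Corollary 2.2 (the case $\sigma(k)=\sigma(n)$) already yields $S(pk,k)\equiv(-1)^k\epsilon((pk)!/k!)\binom{pk+k}{k}\bmod p$ uniformly, with both sides vanishing mod $p$ exactly when a base-$p$ carry occurs.
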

\begin{proof}
This follows immediately from the Lucas congruence for $\binom{pk+k}{k}$, with
$\epsilon((pk)!/k!) \equiv (-1)^k \mod p$.
\end{proof}

We now turn to a result conjectured by T. Amdeberhan et al in [5] and proven by
Hong et al ([8, Th. 3.2]) several years later, namely
\begin{align}
\nu_2(S(2^h+1,k+1))= \sigma_2(k)-1.
\end{align}

We give a proof of this result, which is more general since it works for all
primes $p$, and replaces the assumption that $n=2^h$ by the weaker assumption that
$S(n,k)$ is a minimum zero case. The proof is also shorter and we believe more 
instructive than the one given for the special case $p=2$ in [8].

\begin{theorem} Suppose $S(n,k)$ is a minimum zero case. Then \\
$\nu(S(n+1,k+1))=\nu(S(n,k))$ and $\epsilon(S(n+1,k+1))
\equiv \epsilon(S(n,k)) \text{~mod~} p$.
\end{theorem}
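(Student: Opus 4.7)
The plan is to exploit the standard Stirling recurrence
\[
S(n+1, k+1) = (k+1)\, S(n, k+1) + S(n, k),
\]
and show that the first summand on the right has strictly higher $p$-adic valuation than $S(n,k)$. Granted this, the sum inherits both the valuation and the unit part mod $p$ of $S(n,k)$, yielding the two conclusions at once.

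For the valuation estimate on $(k+1)\,S(n,k+1)$, I would first analyze how $\sigma$ responds to incrementing $k$. Setting $t = \nu_p(k+1)$ and writing $k+1 = p^t m$ with $p \nmid m$, a short base-$p$ digit computation---the last $t$ digits of $k$ are all $p-1$, and the digit in position $t$ is one less than the corresponding digit of $k+1$---yields
\[
\sigma(k+1) = \sigma(k) - (p-1)t + 1.
\]
Because the minimum-zero hypothesis makes $\nu(S(n,k)) = (\sigma(k) - \sigma(n))/(p-1)$ an integer, applying Lemma 2.1 to $S(n,k+1)$ gives
\[
\nu(S(n, k+1)) \ge \left\lceil \nu(S(n,k)) - t + \tfrac{1}{p-1} \right\rceil \ge \nu(S(n,k)) - t + 1.
\]
Multiplying by $k+1$ adds exactly $t$ to the valuation, so $\nu((k+1)\,S(n,k+1)) \ge \nu(S(n,k)) + 1$, which is the required strict dominance.

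The delicate point, and essentially the only one requiring care, is that the ceiling genuinely rounds up by a full integer. This holds because $\sigma(k+1) - \sigma(n) \equiv (k+1) - n \equiv 1 \pmod{p-1}$, using the divisibility $p-1 \mid n-k$ forced by the minimum-zero hypothesis, so the residual fraction $1/(p-1)$ is strictly positive. Boundary cases such as $k+1 > n$ are harmless since $S(n,k+1)$ then vanishes. I do not anticipate any serious obstacle beyond this digit-sum accounting.
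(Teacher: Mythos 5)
Your argument is correct and is essentially the paper's own proof: the same recurrence $S(n+1,k+1)=S(n,k)+(k+1)S(n,k+1)$, the same digit-sum identity $\sigma(k+1)=\sigma(k)+1-(p-1)\nu(k+1)$ (Lemma 4.2), and the same observation that $p-1\mid\sigma(k)-\sigma(n)$ forces the ceiling to round up by a full unit, giving $\nu((k+1)S(n,k+1))\ge\nu(S(n,k))+1$ and hence both conclusions via the standard valuation property (4.10). No gaps.
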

\begin{proof} By the standard recursion for Stirling numbers of the second kind, we have
\[S(n+1,k+1)=S(n,k)+(k+1)S(n,k+1)\]
Hence it will suffice for the first assertion to show that $\nu((k+1)S(n,k+1))>\nu(S(n,k))$,
by a standard property of valuations. By Lemma 2.1,\\
$\nu(S(n,k+1)) \ge \lceil (\sigma(k+1)-\sigma(n))/(p-1)\rceil$, so by Lemma 4.2 we have
\begin{align}
\nu(k+1)+\nu(S(n,k+1)) &\ge \lceil (\nu(k+1)(p-1)+\sigma(k+1)-\sigma(n))/(p-1)
 \rceil \notag\\
&= \lceil(1+\sigma(k)-\sigma(n))/(p-1)\rceil. \notag
\end{align}

But $p-1 |(\sigma(k)-\sigma(n))$ by assumption, so this number equals $1+ \nu(S(n,k))$.
The proof of the congruence now follows from (4.10). 
\end{proof}
Note that simple examples show that $S(n+1,k+1)$ may not be a minimum zero case
in Theorem 2.5. For example, for $p=2$, we have $S(5,3)$ is a minimum zero case, since $n=5$
and $n-k=2$ have no common base $2$ digit, i.e., $2\nmid \binom{n+r}{r}$. However,
$S(6,4)$ is not a minimum zero case since now $n+r=6+2$, which does have a 
base $2$ carry. 

\section{$p$-adic analysis of Stirling numbers of the first kind}

We give some results for Stirling numbers of the first kind $s(n,k)$,
which are analogous to the results for Stirling numbers of the second kind. We believe
they are all new.

We now have the connecting formula
\begin{align}
s(n,k) = \binom{n-1}{k-1}B_{n-k}^{(n)} 
\end{align}

Our first result, which is analogous to Lemma 2.1, and has essentially the same proof,
is the following.

\begin{lemma} $\nu_p(s(n,k)) \ge \lceil \frac{\sigma(k-1)-\sigma(n-1)}{p-1} \rceil$.
\end{lemma}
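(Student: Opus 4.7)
The plan is to mirror the proof of Lemma 2.1, now using the connecting formula (3.1) $s(n,k) = \binom{n-1}{k-1} B_{n-k}^{(n)}$. I would start from
\[
\nu_p(s(n,k)) = \nu_p\!\binom{n-1}{k-1} + \nu_p(B_{n-k}^{(n)}),
\]
apply the standard Kummer/digit-sum formula (4.3) to rewrite the first term as $(\sigma(k-1) + \sigma(n-k) - \sigma(n-1))/(p-1)$, and then invoke Lemma 5.1 to bound the second term from below.

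In the second-kind case the analogous appeal to Lemma 5.1 for $B_{n-k}^{(-k)}$ produced a lower bound whose only $\sigma(n-k)$-dependence was $-\sigma(n-k)/(p-1)$, which canceled exactly against the $+\sigma(n-k)/(p-1)$ contributed by the binomial coefficient and left $\lceil (\sigma(k)-\sigma(n))/(p-1)\rceil$. I expect the identical cancellation here, yielding
\[
\nu_p(s(n,k)) \ge \frac{\sigma(k-1)-\sigma(n-1)}{p-1}.
\]
Since $\nu_p(s(n,k))$ is an integer, this is equivalent to the asserted ceiling inequality.

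The main obstacle is confirming that Lemma 5.1, which in the second-kind application is invoked for the negative superscript $l = -k$, applies in the same uniform way when the superscript is the positive integer $l = n$. I would check the precise statement of Lemma 5.1 in the background section; the pole-structure results of [4] are formulated for general integer $l$, so either the lemma applies directly, or one reduces to the form already used via the standard interchange between higher order Bernoulli data with positive and negative parameters. Once Lemma 5.1 is in place, the remaining computation is purely arithmetic and the argument completes just as in Lemma 2.1, which is why the author remarks that the proof is \emph{essentially the same}.
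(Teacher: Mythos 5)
Your proposal is correct and matches the paper's intended argument: the paper gives no separate proof for this lemma, stating only that it "has essentially the same proof" as Lemma 2.1, which is exactly the computation you carry out (formula (3.1), the digit-sum formula (4.3) for $\nu_p\binom{n-1}{k-1}$, and Lemma 5.1 for $\nu_p(B_{n-k}^{(n)})$, followed by integrality to pass to the ceiling). Your worry about the superscript is unfounded, since Lemma 5.1 is stated for arbitrary $l\in\mathbb{Z}$ and its bound depends only on the lower index, so it applies to $l=n$ exactly as to $l=-k$.
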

Remarks. In [14] Lengyel gives several striking estimates for the $p$-adic values
of $s(n,k)$, including $\nu_p(s(n,k)) \rightarrow \infty$ as $n\rightarrow\infty$
for $k$ fixed. Our methods do not suffice to yield these results. He also considers the case 
where $n-k$ is fixed, and in this case our estimate compares well with his. 

References [11, 15] extend the $p$-adic analysis of Stirling numbers of the first kind,
with [11] making heavy use of the Newton polygon of the horizontal generating
function $(x)_n$.

We can define the minimum zero case for $s(n,k)$ by
\begin{align}
\nu_p(s(n,k)) = (\sigma(k-1)-\sigma(n-1)))/(p-1).
\end{align}

Since $\nu_p(s(n,k)) \in \mathbb{N}$,
this is equivalent to sharpness of the estimate in Lemma 3.1 and to the maximum pole 
case for $B_{n-k}^{(n)}(x)$, i.e. to $\nu(B_{n-k}^{(n)}) = -\sigma(n-k)/(p-1)$.
It is also equivalent to $p \nmid \binom{k-1}{r}$,
where $r=(n-k)/(p-1) \in \mathbb{N}$.

This last formula points to an essential difference between the Stirling numbers of
the first and second kinds, namely the minimum zero case here requires that
$r \le k-1$ since $p\nmid \binom{k-1}{r}$, so $k \le n < kp$ is a necessary condition
for the Stirling number $s(n,k)$ to be a minimum zero case. There is nothing
comparable for Stirling numbers of the second kind.

We get the following theorem, essentially by definition.

\begin{theorem} If $r=(n-k)/(p-1)$, then in the minimum zero case
\[
\nu(s(n,k))=(\sigma(k-1)-\sigma(n-1))/(p-1)
\]
and
\[
\epsilon(s(n,k)) \equiv \epsilon((n-1)!/(k-1)!) \binom{k-1}{r} \mod p.
\]
\end{theorem}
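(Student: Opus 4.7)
The plan is to read the theorem off the connecting formula (3.1), namely $s(n,k) = \binom{n-1}{k-1} B_{n-k}^{(n)}$, using Proposition 5.1 to control the Bernoulli factor. The first equation of the theorem is literally the definition (3.2) of the minimum zero case, so no work is needed there; I record it only to emphasize that everything reduces to analyzing the unit part. As a sanity check, one can independently recover it by adding $\nu_p\binom{n-1}{k-1} = (\sigma(k-1)+\sigma(n-k)-\sigma(n-1))/(p-1)$ (from the standard Legendre identity (4.3)) to $\nu(B_{n-k}^{(n)}) = -\sigma(n-k)/(p-1)$ (the maximum pole condition), watching the $\sigma(n-k)$ terms cancel.

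For the congruence, I take unit parts in the connecting formula and factor the binomial coefficient in the standard way, obtaining $\epsilon(s(n,k)) = \epsilon((n-1)!)/\bigl(\epsilon((k-1)!)\,\epsilon((n-k)!)\bigr) \cdot \epsilon(B_{n-k}^{(n)})$. The main step is then to substitute the Proposition 5.1 mod $p$ formula for $\epsilon(B_{n-k}^{(n)})$ in the maximum pole case. Parallel to the Stirling number of the second kind case, where the substitution with upper index $-k$ yielded $(-1)^r\binom{n+r}{r}\,\epsilon((n-k)!)$ and produced Theorem 2.1, here the substitution with upper index $n$ should yield a factor of the form $\binom{k-1}{r}\,\epsilon((n-k)!)$; the $\epsilon((n-k)!)$ then cancels the corresponding factor in the denominator of the binomial coefficient, and the remaining factors assemble into $\epsilon((n-1)!/(k-1)!)\binom{k-1}{r}$, as desired.

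The main potential obstacle is not conceptual but bookkeeping: one must track signs and arguments carefully when specializing Proposition 5.1 with $l=n$ rather than $l=-k$, and in particular verify that the sign $(-1)^r$ appearing in the $S(n,k)$ formula does \emph{not} reappear here. That this is consistent is already hinted at by the paragraph preceding the theorem, which identifies $p \nmid \binom{k-1}{r}$ directly (rather than via the equivalence $\binom{-n-1}{r} = (-1)^r\binom{n+r}{r}$ used in the $S$-case) as the arithmetic criterion for the minimum zero case of $s(n,k)$. So the binomial $\binom{k-1}{r}$ should emerge from Proposition 5.1 with its natural sign, and the $(-1)^r$ is absorbed into the structure of the $l=n$ specialization.
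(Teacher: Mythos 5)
Your proposal is correct and matches the paper's (essentially unstated) derivation: the first displayed equation is just definition (3.2), and the congruence comes from taking unit parts in $s(n,k)=\binom{n-1}{k-1}B_{n-k}^{(n)}$ and applying the maximum pole congruence (5.6) with the substitution $l=n$, which gives $p^rB_{n-k}^{(n)}/(n-k)! \equiv (-1)^{n-k}\binom{k-1}{r} \equiv \binom{k-1}{r} \bmod p$ since $p-1\mid n-k$ forces $(-1)^{n-k}=1$. Your anticipated sign cancellation is exactly right: $(-1)^r\binom{(n-k)+r-n}{r}=(-1)^r\binom{r-k}{r}=(-1)^{2r}\binom{k-1}{r}=\binom{k-1}{r}$ by (4.9), so the $(-1)^r$ of the second-kind case indeed does not survive.
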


\begin{corollary}
$s(n,k)$ is a minimum zero case if and only if $s(np,kp)$ is a minimum zero case. 
Furthermore, if $s(n,k)$ is a minimum zero case, then $\epsilon(s(n,k)) \equiv
\epsilon(s(np,kp)) \mod p$. 
\end{corollary}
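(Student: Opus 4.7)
The plan is to apply Theorem 3.1 to both sides and verify that each factor matches modulo $p$. Setting $r = (n-k)/(p-1)$, I first observe that the corresponding parameter for $s(np,kp)$ is $r' = (np-kp)/(p-1) = pr$; so $r \in \mathbb{N}$ if and only if $r' \in \mathbb{N}$, which takes care of the integrality preconditions for both invocations of Theorem 3.1.

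Next I would handle the binomial criterion. Writing $k - 1 = \sum_i a_i p^i$ in base $p$, we get $kp - 1 = (p-1) + \sum_i a_i p^{i+1}$, so the base-$p$ digits of $kp-1$ are $p-1$ at position $0$ followed by those of $k-1$ shifted up by one, while the digits of $pr$ are $0$ at position $0$ followed by those of $r$ shifted up. Lucas's theorem then gives
\[
\binom{kp-1}{pr} \equiv \binom{p-1}{0}\prod_i \binom{a_i}{r_i} \equiv \binom{k-1}{r} \pmod p,
\]
which simultaneously establishes the equivalence of the two minimum zero conditions (via the characterization $p \nmid \binom{k-1}{r}$ stated before Theorem 3.1) and matches the binomial factors in the congruences of Theorem 3.1.

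For the remaining factorial ratio $\epsilon((np-1)!/(kp-1)!) \equiv \epsilon((n-1)!/(k-1)!) \pmod p$, the idea is to reduce to a standard identity. Splitting $(mp)!$ into its multiples and non-multiples of $p$ and applying Wilson's theorem to each block $\{ip+1,\ldots,ip+p-1\}$ yields $\epsilon((mp)!) \equiv (-1)^m \epsilon(m!) \pmod p$. Since $\epsilon(mp)=\epsilon(m)$, dividing by $mp$ gives the analogous statement $\epsilon((mp-1)!) \equiv (-1)^m \epsilon((m-1)!) \pmod p$. Taking the ratio at $m=n$ versus $m=k$ produces
\[
\epsilon\!\left(\frac{(np-1)!}{(kp-1)!}\right) \equiv (-1)^{n-k}\,\epsilon\!\left(\frac{(n-1)!}{(k-1)!}\right) \pmod p.
\]

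The main obstacle, and the last step, is dispatching the sign $(-1)^{n-k}$. In the minimum zero case $p-1$ divides $n-k$, so for odd $p$ the exponent $n-k$ is even and the sign is $+1$; for $p=2$ the sign collapses since $-1 \equiv 1 \pmod 2$. Either way $(-1)^{n-k} \equiv 1 \pmod p$, and combining the two congruences through Theorem 3.1 delivers $\epsilon(s(np,kp)) \equiv \epsilon(s(n,k)) \pmod p$, completing the corollary.
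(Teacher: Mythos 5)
Your proposal is correct and is essentially the argument the paper intends: the corollary is stated without proof as a consequence of Theorem 3.1, the characterization $p\nmid\binom{k-1}{r}$, the digit-shift behavior under multiplication by $p$ (as in (4.8)), and the congruence $\epsilon_p((pk)!)\equiv(-1)^k\epsilon_p(k!)$ of (4.11). Your handling of the shifted digits of $kp-1$ via Lucas and your disposal of the sign $(-1)^{n-k}$ using $p-1\mid n-k$ are exactly the details the paper leaves to the reader.
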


We have a theorem for Stirling numbers of the first kind analogous to De
Wannemacker's Theorem, generalized to arbitrary primes.

\begin{theorem} Let $k$ have a single base $p$ digit, i.e. $k=ap^h$ with
$1 \le a \le p-1$. Then the minimum zero case holds for all $s(n,k)$ with
$k \le n < kp$ such that $p-1|n-k$.
\end{theorem}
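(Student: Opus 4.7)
The plan is to reduce the claim to a single application of Lucas' theorem, using the binomial characterization of the minimum zero case stated just before Theorem 3.1: $s(n,k)$ is a minimum zero case iff $r := (n-k)/(p-1) \in \mathbb{N}$ and $p \nmid \binom{k-1}{r}$. The hypothesis $p-1 \mid n-k$ takes care of the first condition, so the entire proof reduces to verifying $p \nmid \binom{k-1}{r}$ under the assumptions $k = ap^h$, $1 \le a \le p-1$, and $k \le n < kp$.

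My first step is to write down the relevant base $p$ expansions. Since $k - 1 = ap^h - 1 = (a-1)p^h + (p^h - 1)$, the digits of $k-1$ are $a-1$ in position $h$, $p-1$ in each of positions $0, 1, \ldots, h-1$, and $0$ in all higher positions. For $r$, the upper bound $n < kp = ap^{h+1}$ gives $n - k < ap^h(p-1)$, hence $r < ap^h = k$; writing $r = d_h p^h + r'$ with $0 \le r' < p^h$, the inequality $d_h p^h \le r < ap^h$ forces $d_h \le a-1$, and $r$ has no digits above position $h$.

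With these expansions in hand, a digit-by-digit check confirms that every base $p$ digit of $r$ is at most the corresponding digit of $k-1$: in positions $0, \ldots, h-1$ this is automatic because $k-1$ carries the maximal digit $p-1$, in position $h$ it is exactly the inequality $d_h \le a-1$, and in higher positions both digits vanish. Lucas' theorem then delivers $p \nmid \binom{k-1}{r}$, which by the criterion recalled above puts $s(n,k)$ in the minimum zero case. I do not anticipate a real obstacle; the only bookkeeping point worth noting is the edge case $a = 1$, where the digit of $k-1$ in position $h$ drops to $0$, but then the bound $r < p^h$ eliminates position $h$ of $r$ entirely, so the Lucas comparison still holds.
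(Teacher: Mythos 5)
Your proposal is correct and follows essentially the same route as the paper's own proof: both reduce the statement to the criterion $p \nmid \binom{k-1}{r}$, observe that $n < kp$ forces $r < k$ so $r \le k-1$, and then apply Lucas' theorem using the base $p$ expansion $k-1 = (a-1)p^h + (p-1)p^{h-1} + \cdots + (p-1)$. Your version merely spells out the digit-by-digit comparison (including the $a=1$ edge case) that the paper leaves implicit.
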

\begin{proof} If $r=(n-k)/(p-1)$ then clearly $ r \le k-1$ since $n-k<k(p-1)$ which implies by 
Lucas's Theorem that $p\nmid\binom{k-1}{r}$, since $k-1 = (a-1)p^h + \linebreak
(p-1)p^{h-1}+ \cdots + (p-1)$.
\end{proof}

\begin{corollary} With the same assumptions and notations
\[
\nu(s(n,ap^h)) = \frac{a-1-\sigma(n-1)}{p-1} +h.
\]
and
\[
\epsilon(s(n,ap^h)) \equiv (-1)^{ah+r-r_h} \frac{\epsilon((n-1)!)}{(a-1)!}
 \binom{a-1}{r_h} \mod p,
\]
where $r_h$ is the coefficient of $p^h$ in the base $p$ representation of $r$.
\end{corollary}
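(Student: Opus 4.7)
The plan is to invoke Theorem 3.2 to conclude that $s(n, ap^h)$ is a minimum zero case, then apply Theorem 3.1 to read off the valuation and unit-part formulas, and finally reduce the two combinatorial ingredients that appear, namely $\epsilon((k-1)!)$ and $\binom{k-1}{r}$, modulo $p$ using Lucas' theorem and the standard Wilson-type congruence $\epsilon((ap^h)!)\equiv (-1)^{ah}a!\pmod p$ already used in Corollary 2.1.

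For part (i), the key observation is the base-$p$ expansion
\[
k-1 = (a-1)p^h + (p-1)p^{h-1} + \cdots + (p-1),
\]
which yields $\sigma(k-1)=(a-1)+h(p-1)$; substituting into the identity $\nu(s(n,k))=(\sigma(k-1)-\sigma(n-1))/(p-1)$ from Theorem 3.1 gives (i) immediately. For part (ii), I would start from the formula $\epsilon(s(n,k))\equiv \epsilon((n-1)!)\,\binom{k-1}{r}\,/\,\epsilon((k-1)!)\pmod p$ of Theorem 3.1. Dividing the Wilson-type congruence by $\epsilon(ap^h)=a$ yields $\epsilon((k-1)!)\equiv (-1)^{ah}(a-1)!\pmod p$. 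Applying Lucas' theorem to $\binom{k-1}{r}$ using the expansion above (and noting that $r<ap^h$ forces the $h$-th digit of $r$ to be at most $a-1$ and all higher digits to be zero), gives
\[
\binom{k-1}{r} \equiv \binom{a-1}{r_h}\prod_{i=0}^{h-1}\binom{p-1}{r_i}\pmod p,
\]
and the well-known $\binom{p-1}{j}\equiv(-1)^j\pmod p$ collapses the product to $(-1)^{\sigma(r)-r_h}$.

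The only slightly delicate point, and the one I expect to need explicit comment, is reconciling the natural exponent $\sigma(r)-r_h$ that emerges from the calculation with the exponent $r-r_h$ appearing in the statement. This is handled by the standard digit-sum identity $r\equiv\sigma(r)\pmod{p-1}$: for odd $p$ the modulus $p-1$ is even, so $(-1)^{\sigma(r)-r_h}=(-1)^{r-r_h}$, while for $p=2$ every sign is congruent to $1$ mod $p$ and the distinction is moot. Assembling the three factors $(-1)^{ah}$, $\binom{a-1}{r_h}$, and $(-1)^{r-r_h}$ with $\epsilon((n-1)!)/(a-1)!$ then gives exactly the claimed congruence.
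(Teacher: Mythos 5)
Your proposal is correct and follows essentially the same route as the paper's own proof: read off $\sigma(k-1)=a-1+h(p-1)$ from the expansion $k-1=(a-1)p^h+(p-1)p^{h-1}+\cdots+(p-1)$ for part (i), and for part (ii) combine Theorem 3.1 with the Wilson-type congruence $\epsilon((ap^h)!)\equiv(-1)^{ah}a!\bmod p$ (divided by $k$), the Lucas factorization of $\binom{k-1}{r}$ with $\binom{p-1}{j}\equiv(-1)^j$, and the parity identity $r\equiv\sigma(r)\bmod (p-1)$. Your write-up is in fact more explicit than the paper's terse sketch, and the points you flag as delicate (the digit bound $r_h\le a-1$ and the $\sigma(r)$ versus $r$ exponent) are exactly the ones the paper handles the same way.
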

\begin{proof} $\sigma(k-1)$ is given in the above proof, namely $\sigma(k-1) = a-1+h(p-1)$,
which gives the first part. 
For the congruence part, use Lemma 4.1 applied to $\epsilon(k!)$ with $(k-1)! = k!/k$,
together with the Lucas congruence with the last line of the preceding proof, 
and the fact that $\binom{p-1}{r_i} \equiv 
\binom{-1}{r_i} = (-1)^{r_i}$ mod $p$, for each digit $r_i$ of $r$, together with
$\sigma(r) \equiv r \mod p-1$, so $\sigma(r)$ and $r$ have the same parity if $p \ne 2$.
\end{proof}

Remark. The presence of $h$ in $\nu(s(n,ap^h))$ is different from the situation for
$\nu(S(ap^h,k))$, and illustrates that the Stirling numbers of the first and second
kind have different character. 

The special case for $p=2$ is particularly simple and is worth noting.

\begin{corollary} Let $k=2^h$. Then if $2^h \le n < 2^{h+1}$, we have
\begin{align}
\nu_2(s(n,k)) = h-\sigma_2(n-1). \notag
\end{align}
\end{corollary}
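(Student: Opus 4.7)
The plan is to derive this as the immediate specialization of Corollary 3.1 to the case $p=2$, $a=1$. First I would verify that the hypotheses of Theorem 3.2 are met: with $p=2$ the constraint $1 \le a \le p-1$ forces $a=1$, and writing $k = 2^h = 1 \cdot 2^h$ exhibits the required single base-$p$-digit form; the divisibility hypothesis $p-1 \mid n-k$ is automatic since $p-1 = 1$; and the hypothesis $k \le n < kp$ becomes exactly $2^h \le n < 2^{h+1}$. Hence $s(n,2^h)$ is a minimum zero case for every $n$ in the stated range.

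Having verified the hypotheses, I would substitute $a=1$, $p=2$ into the valuation formula of Corollary 3.1,
\[
\nu(s(n, ap^h)) = \frac{a-1-\sigma(n-1)}{p-1} + h,
\]
which collapses to $h - \sigma_2(n-1)$. The main thing to notice is the innocuous degeneracy $a-1 = 0$, $p-1 = 1$: the arithmetic content of the general corollary disappears at $p=2$, leaving only the shift by $h$ and the digit-sum term. There is no real obstacle; the statement is a direct unpacking of Corollary 3.1 after confirming that the single-digit and range conditions hold trivially when $p=2$ and $k=2^h$.
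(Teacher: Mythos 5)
Your proposal is correct and matches the paper's intent exactly: the paper offers no separate proof, presenting this corollary as the immediate $p=2$, $a=1$ specialization of the preceding corollary (which is Corollary 3.2 in the paper's numbering, not 3.1 as you cite), whose formula $\frac{a-1-\sigma(n-1)}{p-1}+h$ collapses to $h-\sigma_2(n-1)$. Your verification of the hypotheses of Theorem 3.2 and the observation that the conditions degenerate trivially at $p=2$ is exactly the intended reading.
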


We have an invariance property for Stirling numbers of the first kind analogous
to the Stirling numbers of the second kind. The proof is essentially similar, and 
we will omit it. 

\begin{theorem}
Let $s(n,k)$ be a minimum zero case. Assume that $p^{\nu(t)}>n$. Let
$n'=t+n$ and $k'=t+k$. Then $s(n',k')$ is a minimum zero case and
\begin{itemize}
\item[(i)] $\nu(s(n',k'))=\nu(s(n,k))$.
\item[(ii)] $\epsilon(s(n',k')) \equiv \epsilon(s(n,k)) \mod p$. 
\end{itemize}
\end{theorem}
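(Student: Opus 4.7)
The plan is to mimic the proof of Theorem 2.3 almost verbatim, exploiting the fact that the hypothesis $p^{\nu(t)} > n$ forces the nonzero base $p$ digits of $t$ to sit strictly above those of both $n$ (and hence of $n-1$ and of $k-1$, since $k\le n$, $k-1\le n-1<n<p^{\nu(t)}$). So $t$ and $n-1$, and $t$ and $k-1$, have disjoint base $p$ representations, and $n'-1=(n-1)+t$, $k'-1=(k-1)+t$ are simply the concatenations.

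First I would verify the minimum zero condition for $s(n',k')$ via the criterion $p\nmid \binom{k'-1}{r}$ from the discussion preceding Theorem 3.1. Since $n'-k'=n-k$, the integer $r=(n'-k')/(p-1)$ is unchanged. By the minimum zero hypothesis for $s(n,k)$, we have $r\le k-1 < p^{\nu(t)}$, so the base $p$ digits of $r$ also lie below the digits of $t$. Lucas's theorem applied to $\binom{k'-1}{r}=\binom{(k-1)+t}{r}$ therefore yields $\binom{k'-1}{r}\equiv \binom{k-1}{r}\pmod p$, so nonvanishing propagates from $s(n,k)$ to $s(n',k')$.

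For part (i), disjointness gives $\sigma(k'-1)=\sigma(k-1)+\sigma(t)$ and $\sigma(n'-1)=\sigma(n-1)+\sigma(t)$, so $\sigma(k'-1)-\sigma(n'-1)=\sigma(k-1)-\sigma(n-1)$. Combined with the formula for $\nu(s(n,k))$ in the minimum zero case (Theorem 3.1), this immediately gives $\nu(s(n',k'))=\nu(s(n,k))$.

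For part (ii), I would exploit the same binomial manipulation used in the proof of Theorem 2.3. Writing
\[
\frac{(n'-1)!/(k'-1)!}{(n-1)!/(k-1)!}=\frac{(n'-1)!}{(n-1)!\,t!}\bigg/\frac{(k'-1)!}{(k-1)!\,t!}=\binom{n'-1}{n-1}\bigg/\binom{k'-1}{k-1},
\]
both binomial coefficients are $\equiv 1\pmod p$ by Lucas, again because of disjointness of digits. Hence $\epsilon((n'-1)!/(k'-1)!)\equiv \epsilon((n-1)!/(k-1)!)\pmod p$, and together with $\binom{k'-1}{r}\equiv \binom{k-1}{r}\pmod p$ the congruence in Theorem 3.1 delivers $\epsilon(s(n',k'))\equiv \epsilon(s(n,k))\pmod p$. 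The only subtlety is bookkeeping the right "shifted" disjointness statement (noting that $k-1,n-1<p^{\nu(t)}$ follows from $n<p^{\nu(t)}$); no genuine obstacle arises, which is presumably why the authors omitted the proof.
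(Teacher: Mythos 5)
Your proof is correct and follows exactly the route the paper intends: the paper omits this proof as ``essentially similar'' to that of Theorem 2.3, with $t$ now the common top segment rather than a common bottom one, and your argument carries out precisely that adaptation (digit disjointness from $p^{\nu(t)}>n$, invariance of $r$, Lucas applied to $\binom{k'-1}{r}$, and the ratio $\binom{n'-1}{n-1}\big/\binom{k'-1}{k-1}\equiv 1 \bmod p$ feeding into the congruence of Theorem 3.1). No gaps.
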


In this case $t$ is the common top segment of $n'$ and $k'$, and $n$ and $k$
are the respective bottom segments. 

The special case when $k=ap^h$ with $1 \le a \le p-1$ and $pk>n \ge k$ and
$p-1|n-k$, has the same invariance, which is a strengthening of the analog of
DeWannemacker's Theorem for Stirling numbers of the first kind. 

Finally we prove an analog of the Hong, Zhao and Zhao result for Stirling numbers
of the first kind, also valid for all primes $p$, and generalized to minimum zero
cases.

\begin{theorem} Let $s(n,k)$ be a minimum zero case. Then
\begin{align}
\nu(s(n-1,k-1)) &= \nu(s(n,k)) \quad \text{and} \notag\\
\epsilon(s(n-1,k-1)) &\equiv \epsilon(s(n,k)) \text{~mod~} p. \notag
\end{align}
\end{theorem}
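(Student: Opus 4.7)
The plan is to mimic the proof of Theorem 2.5 for Stirling numbers of the second kind, but using the recursion for Stirling numbers of the first kind, namely
\[
s(n,k) = s(n-1,k-1) - (n-1)\,s(n-1,k),
\]
which rearranges as $s(n-1,k-1) = s(n,k) + (n-1)\,s(n-1,k)$. By the standard non-Archimedean property of valuations, it suffices to show that $\nu((n-1)\,s(n-1,k)) > \nu(s(n,k))$; both the valuation equality and the congruence for the unit parts will then follow at once.

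The estimate from Lemma 3.1 gives
\[
\nu(s(n-1,k)) \ge \Bigl\lceil \tfrac{\sigma(k-1)-\sigma(n-2)}{p-1}\Bigr\rceil,
\]
so the main step is to control $\sigma(n-2)$ in terms of $\sigma(n-1)$ and $\nu(n-1)$. Writing $j=\nu_p(n-1)$, the base $p$ representation of $n-1$ ends in exactly $j$ zeros, so subtracting $1$ to form $n-2$ decrements the digit in position $j$ and turns the $j$ trailing zeros into $(p-1)$'s. This yields the identity
\[
\sigma(n-2) = \sigma(n-1) - 1 + j(p-1).
\]
Plugging this in and adding $\nu(n-1)=j$ on the left absorbs the $-j(p-1)$ contribution inside the ceiling:
\[
\nu(n-1)+\nu(s(n-1,k)) \ge \Bigl\lceil \tfrac{\sigma(k-1)-\sigma(n-1)+1}{p-1}\Bigr\rceil.
\]
Because $S(n,k)$—sorry, $s(n,k)$—is a minimum zero case, $p-1$ divides $\sigma(k-1)-\sigma(n-1)$, and the ceiling on the right equals $\nu(s(n,k))+1$, which is strictly larger than $\nu(s(n,k))$, as required.

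The main potential obstacle is precisely the digit-sum identity for $\sigma(n-2)$: without it, the straightforward estimate from Lemma 3.1 loses the full gain needed to beat $\nu(s(n,k))$. Once that identity is in place, the proof is parallel to Theorem 2.5, with the recursion playing the role of $S(n+1,k+1)=S(n,k)+(k+1)S(n,k+1)$ and $\nu(n-1)$ playing the role of $\nu(k+1)$. The congruence $\epsilon(s(n-1,k-1)) \equiv \epsilon(s(n,k)) \bmod p$ follows from the strict inequality in the same way that (4.10) was invoked at the end of the proof of Theorem 2.5.
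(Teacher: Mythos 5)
Your proposal is correct and follows exactly the route the paper intends: rearrange the first-kind recursion $s(n-1,k-1)=s(n,k)+(n-1)s(n-1,k)$, bound $\nu((n-1)s(n-1,k))$ via Lemma 3.1 together with the digit-sum identity (which is just Lemma 4.2 applied to $n-1=(n-2)+1$), and conclude by the non-Archimedean property (4.10). The paper omits these details, stating only that the argument is analogous to Theorem 2.5, and your write-up supplies precisely the intended analogue.
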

\begin{proof} This is entirely analogous to the previous proof for the Stirling numbers of the
second kind, now using the basic recursion
\begin{align}
s(n,k)=s(n-1,k-1)-(n-1)s(n-1,k). \notag
\end{align}
\end{proof}

The rest of the proof is essentially the same as in Theorem 2.5, so we omit the details.

Observe that $s(n-1,k-1)$ may not be a minimum zero case. 

\section{$p$-adic preliminaries}

We now collect, for reference purposes, some useful standard and elementary
$p$-adic results. 

\begin{align}
n \equiv \sigma_p(n) \mod (p-1), \text{~i.e.~} p-1 | (n-\sigma_p(n)).
\end{align}

This paper makes heavy use of standard results on factorials and binomial coefficients,
which we now summarize:
\begin{align}
\nu_p(n!) &= (n-\sigma_p(n))/(p-1). \\
\nu_p\binom{n}{m} &= (\sigma_p(m)+\sigma_p(n-m)-\sigma_p(n))/(p-1).
\end{align}

Remark. From (4.2) and (4.3), it immediately follows that if $p-1|n-k$ then 
$\binom{n}{k}=\epsilon(n!/k!) p^{(n-k)/(p-1)} p^{(\sigma(k)-\sigma(n))/(p-1)}/(n-k)!$.

It also follows that $\nu_p\binom{n}{m}$ = number of carries for the base $p$ addition
of $m$ and $n-m$, whence we have the Lucas Theorem that
\begin{align}
p\nmid \binom{n}{m} \text{~iff~} n_i \ge m_i \text{~for all base~} p \text{~digits}.
\end{align}

In fact, the Lucas congruence says
\begin{align}
\binom{n}{m} \equiv \prod_i \binom{n_i}{m_i} \mod p.
\end{align}

An important special case of the Lucas congruence is that if $r$ and $n$ have disjoint
base $p$ representations then
\begin{align}
\binom{n+r}{r} \equiv 1 \mod p.
\end{align}

There is a more subtle congruence discovered by H. Anton in 1869 that if $\nu_p
\binom{n}{m}=e$ and $r=n-m$ then 
\begin{align}
 \frac{(-1)^e}{p^e} \binom{n}{m} \equiv \prod \frac{n_i!}{m_i! r_i!} \mod p
\end{align}
where $n_i,m_i,r_i$ are the base $p$ digits of $n, m, r$ respectively. 
This is a mod $p$ congruence for $\epsilon_p\binom{n}{m}$, up to the sign $(-1)^e$.

Since the base $p$ digits of $np$ are the same as those of $n$ shifted one place to the
left, it follows immediately from the Lucas and Anton congruences that 
\begin{align}
\nu_p\binom{np}{mp} = \nu_p\binom{n}{m} \text{~and~} \epsilon_p\binom{np}{mp}
 \equiv \epsilon_p\binom{n}{m} \mod p.
\end{align}

If $p$ is understood by the context, we may suppress the $p$ in our notations, i.e. use
$\nu, \sigma, \epsilon$ instead of $\nu_p, \sigma_p,\epsilon_p$ respectively. 

Finally, we make frequent use of the formula 
\begin{align}
\binom{-a}{r} = (-1)^r \binom{a+r-1}{r}, \text{~i.e.~} \binom{-(n+1)}{r} =(-1)^r
 \binom{n+r}{r}.
\end{align}

By basic properties of valuations, it is clear that $\epsilon(ab)=\epsilon(a)\epsilon(b)$,
and 
\begin{align}
\text{if~} \nu(a) < \nu(b), \text{~then~}
\nu(a+b)=\nu(a) \text{~and~} \epsilon(a+b)\equiv\epsilon(a) \mod p.
\end{align}

Remark. It is worth noting that $c/d \equiv 1 \mod p$ if and only if
$\nu(c)=\nu(d)$ and $\epsilon(c) \equiv \epsilon(d) \mod p$. 

These observations lead immediately to the following lemma. We omit the proof, 
which is a straightforward generalization of Wilson's Theorem and proof.

\begin{lemma} Assume $1 \le a \le p-1$. Then
\[
\epsilon((ap^h)!) \equiv (-1)^{ah}a! \mod p.
\]
\end{lemma}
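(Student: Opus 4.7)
The plan is to argue by induction on $h$, directly mimicking the classical proof of Wilson's theorem. The base case $h=0$ is immediate: since $1 \le a \le p-1$, the factorial $a!$ is already a $p$-adic unit, so $\epsilon(a!) = a! = (-1)^{a\cdot 0} a!$.

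For the inductive step, I would split $(ap^h)!$ into its factors coprime to $p$ and its factors divisible by $p$. The latter are exactly $p, 2p, \dots, ap^{h-1}\cdot p$, so
\[
\prod_{\substack{1 \le j \le ap^h \\ p \mid j}} j \;=\; \prod_{i=1}^{ap^{h-1}}(ip) \;=\; p^{ap^{h-1}}\,(ap^{h-1})!\,.
\]
The factor $p^{ap^{h-1}}$ is pure valuation, and $\epsilon$ is multiplicative, so
\[
\epsilon\bigl((ap^h)!\bigr) \;\equiv\; \Bigl(\,\prod_{\substack{1 \le j \le ap^h \\ p \nmid j}} j\,\Bigr)\,\epsilon\bigl((ap^{h-1})!\bigr) \mod p .
\]

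Next I would invoke Wilson's theorem block-by-block. The integers in $[1,ap^h]$ coprime to $p$ partition into $ap^{h-1}$ consecutive blocks of the form $\{kp+1,\dots,kp+p-1\}$, each of which reduces mod $p$ to $\{1,\dots,p-1\}$ and therefore has product $(p-1)! \equiv -1 \mod p$. Hence the first factor above is $(-1)^{ap^{h-1}} \mod p$. For odd $p$ and $h \ge 1$, $p^{h-1}$ is odd, so this equals $(-1)^a$; combining with the inductive hypothesis $\epsilon((ap^{h-1})!) \equiv (-1)^{a(h-1)}a! \mod p$ then collapses the product to $(-1)^{ah}a!$. The case $p=2$ is vacuous, since every $p$-adic unit is $\equiv 1 \mod 2$.

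The argument is essentially routine; the only subtlety to flag is the parity computation for $(-1)^{ap^{h-1}}$, which is immediate once one notes that $p^{h-1}$ is odd for odd $p$. This is precisely the "straightforward generalization of Wilson's Theorem" the author alludes to, so no deeper tool beyond Wilson and the Legendre formula (4.2) should be needed.
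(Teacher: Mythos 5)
Your proof is correct; the paper omits the proof of this lemma entirely, describing it only as ``a straightforward generalization of Wilson's Theorem and proof,'' and your induction on $h$ --- separating the multiples of $p$ to get $p^{ap^{h-1}}(ap^{h-1})!$ and applying Wilson's theorem block-by-block to the $ap^{h-1}$ blocks of units --- is exactly the intended argument. The parity point you flag for $(-1)^{ap^{h-1}}$ (odd $p^{h-1}$ for odd $p$, and triviality mod $2$ when $p=2$) is handled correctly.
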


It is also well-known and easy to prove that
\begin{align}
\epsilon_p((pk)!) \equiv (-1)^k\epsilon_p(k!) \text{~mod~} p.
\end{align}

Finally we conclude with a useful, elementary lemma.

\begin{lemma} $\sigma_p(k+1)=\sigma_p(k)+1-(p-1)u$ where $u=\nu_p(k+1)$ = 
number of consecutive digits at the bottom of the base $p$ representation
of $k$ which are equal to $p-1$.
\end{lemma}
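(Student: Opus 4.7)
The plan is to directly unfold the base $p$ representation of $k$ and read off what happens when we add $1$. Write $k=\sum_{i\ge 0}k_ip^i$ in base $p$ and let $u$ be the largest integer (possibly $0$) such that $k_0=k_1=\cdots=k_{u-1}=p-1$. By maximality of $u$, the digit $k_u$ satisfies $k_u\le p-2$, with the convention $k_u=0$ in the edge case $k=p^u-1$ where $k$ has no digit at position $u$.

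Next I would explicitly perform the addition. The carry generated by adding $1$ to the bottom digit propagates through the bottom $u$ positions, turning each of those $p-1$'s into $0$, and finally lands at position $u$, incrementing $k_u$ to $k_u+1\le p-1$ without causing any further carry. All digits above position $u$ are untouched. Hence the base $p$ representation of $k+1$ is
\[
k+1 \;=\; (k_u+1)p^u + \sum_{i>u}k_ip^i.
\]

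From this representation both quantities of interest can be read off directly. The bottom $u$ digits of $k+1$ are zero and the digit at position $u$ is nonzero, so $\nu_p(k+1)=u$, which identifies $u$ with the valuation as asserted. Comparing digit sums, passing from $k$ to $k+1$ has removed $u$ digits equal to $p-1$ and increased another digit by $1$, so $\sigma_p(k+1)=\sigma_p(k)-u(p-1)+1$, which is exactly the claimed identity.

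There is no real obstacle; the argument is pure base $p$ bookkeeping. The only points to watch are the degenerate cases $u=0$ (no bottom digits equal to $p-1$, in which case the formula collapses to $\sigma_p(k+1)=\sigma_p(k)+1$) and $k=p^u-1$ (all digits equal to $p-1$, in which case $k+1=p^u$ and the formula correctly yields $\sigma_p(k+1)=1$); both are handled uniformly by the convention above.
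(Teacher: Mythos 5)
Your proof is correct and follows the same route as the paper's: the paper's entire proof is the one-line observation that adding one to $k$ replaces the bottom $u$ digits (all equal to $p-1$) by zeros and increments the next digit by one, which is exactly the carry-propagation bookkeeping you spell out. Your version merely makes the digit decomposition, the identification $\nu_p(k+1)=u$, and the edge cases explicit.
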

\begin{proof} The effect of adding one to $k$ is to replace the bottom $u$ digits by
zeros and increase the next digit by one.
\end{proof}

\section{Background on Stirling numbers and higher order Bernoulli numbers and 
polynomials}

If $n \in \mathbb{N}$ and $l \in \mathbb{Z}$, the Bernoulli polynomials $B_n^{(l)}(x)$
of order $l$ and degree $n$ are defined by
\begin{align}
\left( \frac{t}{e^t-1}\right)^l e^{tx} = \sum_{n=0}^\infty B_n^{(l)}(x) \frac{t^n}{n!}.
\end{align}

The higher order Bernoulli numbers are the constant terms $B_n^{(l)}=B_n^{(l)}(0)$. 
The polynomial $B_n^{(l)}(x) \in \mathbb{Q}[x]$ is monic with degree $n$.

The Stirling numbers of the first kind $s(n,k)$ can be defined by
\begin{align}
(x)_n = \sum_{k=1}^\infty s(n,k)x^k
\end{align}
where $(x)_n=x(x-1) \cdots (x-(n-1)) =n! \binom{x}{n}$.

The $s(n,k)$ are integers and the sign of $s(n,k)$ is $(-1)^{n-k}$. The unsigned Stirling
numbers $|s(n,k)|$ count the number of $n$-permutations with $k$ cycles.

The Stirling numbers of the second kind $S(n,k)$ can be defined combinatorially by
\begin{align}
S(n,k) = \text{~number of partitions of an $n$-set into $k$ subsets.}
\end{align}

Remarks. We showed in [1] how to precisely locate the successively increasing order
poles of the coefficients of $B_n^{(l)}(x)$, arranged from top degree down, 
 which we call the poles of $B_n^{(l)}(x)$,
and we showed that these poles have a remarkably regular pattern. 
The salient features of the pole pattern are that 
the first pole has order $1$, the next bigger pole has order
$2$, etc., and that all these first occurrences appear in codegrees $i$, where
$p-1|i$ and $p\nmid \binom{n}{i}$.

Subsequently
in [4] we interpreted these results in terms of the Newton polygon of $B_n^{(l)}(x)$
and gave a precise, algorithmic, description of the descending portion of this Newton
polygon, which summarizes the pole pattern.

The following lemma was proven in [1], and by a different method, also in~[3].

\begin{lemma} 
\begin{align}\nu(B_n^{(l)}) \ge -\lfloor \sigma(n)/(p-1)\rfloor. \notag
\end{align}.
\end{lemma}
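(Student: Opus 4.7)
The plan is to reduce Lemma 5.1 to the classical von Staudt--Clausen bound $\nu_p(B_m) \ge -1$ on ordinary Bernoulli numbers (with equality iff $(p-1)\mid m$ and $m>0$), exploiting the multiplicative structure of the generating function $(t/(e^t-1))^l$.

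For $l \in \mathbb{N}$, I would multiply out $(t/(e^t-1))^l$ as an $l$-fold product of the ordinary Bernoulli generating function to extract the multinomial convolution
\[
B_n^{(l)} = \sum_{n_1+\cdots+n_l=n} \binom{n}{n_1,\ldots,n_l} B_{n_1}\cdots B_{n_l}.
\]
Combining the multinomial analog of (4.3), namely $\nu_p\binom{n}{n_1,\ldots,n_l} = (\sum_i \sigma_p(n_i) - \sigma_p(n))/(p-1)$, with von Staudt--Clausen on each factor, every term has valuation at least $(\sum_i \sigma_p(n_i) - \sigma_p(n))/(p-1) - |S|$, where $S = \{i : n_i>0 \text{ and } (p-1) \mid n_i\}$. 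The key observation is that by (4.1), $i \in S$ forces $\sigma_p(n_i)$ to be a positive multiple of $p-1$, so $\sigma_p(n_i)/(p-1) \ge 1$ exactly absorbs the $-1$ loss from that factor; the remaining indices contribute $\sigma_p(n_i)/(p-1) \ge 0$. Hence every term is $\ge -\sigma_p(n)/(p-1)$, and since $\nu_p(B_n^{(l)}) \in \mathbb{Z}$ we may round up to obtain $\nu_p(B_n^{(l)}) \ge -\lfloor \sigma_p(n)/(p-1)\rfloor$.

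The case $l=0$ is trivial since $B_n^{(0)} = \delta_{n,0}$. For $l=-k$ with $k \ge 1$---the case actually needed in Lemma 2.1 via $S(n,k)=\binom{n}{k} B_{n-k}^{(-k)}$---von Staudt--Clausen no longer bounds the factors directly, so I would instead use the triangular identity
\[
\sum_{j=0}^n \binom{n}{j} B_j^{(k)} B_{n-j}^{(-k)} = \delta_{n,0}
\]
coming from $(t/(e^t-1))^k \cdot ((e^t-1)/t)^k = 1$. Solving for $B_n^{(-k)}$ and inducting on $n$ (with base $B_0^{(-k)}=1$), the positive-order bound just established applies to each $B_j^{(k)}$ and the inductive hypothesis to each $B_{n-j}^{(-k)}$; the same $(p-1)^{-1}$ telescoping with $\nu_p\binom{n}{j}$ shows each summand is $\ge -\sigma_p(n)/(p-1)$, and integrality rounding closes the induction.

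The main obstacle is this negative-order case: the clean convolution argument breaks down, so a triangular inversion plus induction is needed, and one must verify at each step that the $\mathbb{Z}$-valuedness of $\nu_p$ on $\mathbb{Q}^{\times}$ lets the rounding go through cleanly---which it does because every $B_n^{(l)}$ lies in $\mathbb{Q}$.
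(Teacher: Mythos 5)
Your proof is correct, and relative to this paper it is a genuinely different route for the simple reason that the paper gives no proof of Lemma 5.1 at all: it only cites [1], where the bound falls out of the analysis of the pole pattern of the coefficients of $B_n^{(l)}(x)$, and [3], which uses congruences for $p$-adic integer order Bernoulli numbers. Your argument is elementary and self-contained. For $l\ge 1$, the multinomial convolution $B_n^{(l)}=\sum\binom{n}{n_1,\dots,n_l}B_{n_1}\cdots B_{n_l}$ together with von Staudt--Clausen works because $\nu_p\binom{n}{n_1,\dots,n_l}=(\sum_i\sigma(n_i)-\sigma(n))/(p-1)$ and each index with $n_i>0$ and $(p-1)\mid n_i$ contributes $\sigma(n_i)/(p-1)\ge 1$, exactly absorbing the possible $-1$ from that Bernoulli factor; for $l=-k$ the inversion of $\sum_{j}\binom{n}{j}B_j^{(k)}B_{n-j}^{(-k)}=\delta_{n,0}$ (valid since $B_0^{(k)}=1$) with induction on $n$ telescopes identically, and integrality of $\nu_p$ on $\mathbb{Q}^{\times}$ upgrades the bound $-\sigma(n)/(p-1)$ to $-\lfloor\sigma(n)/(p-1)\rfloor$. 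Two small points: the parenthetical ``equality iff $(p-1)\mid m$ and $m>0$'' is not literally true (for $p=2$ and odd $m\ge 3$ one has $(p-1)\mid m$ but $B_m=0$), though you only use the correct implication that $\nu_p(B_m)\ge 0$ when $(p-1)\nmid m$ or $m=0$; and vanishing terms are covered by the convention $\nu_p(0)=\infty$. The trade-off is that your route proves exactly the inequality of Lemma 5.1 from classical facts, whereas the machinery of [1,4] that the paper relies on also yields the sharpness criterion (5.5) and the congruence (5.6) that drive Section 2, which the convolution bound by itself does not recover.
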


We were also able to prove some general congruences for the higher order
Bernoulli numbers $B_n^{(l)}$ in [3]. We will generally assume that $p-1 |n$ (or \linebreak
$p-1|n-k$ for the applications to Stirling numbers $S(n,k)$ and $s(n,k)$),
since that is simplest.
The following proposition is the special case of [3, Th. 1] where $p-1|n$, with
some notational changes.

\begin{proposition} Suppose $p-1|n$ and let $r=n/(p-1)$. Then
\[
(-1)^n p^r B_n^{(l)}/n! \equiv (-1)^r\binom{n+r-l}{r} \mod p.
\]
\end{proposition}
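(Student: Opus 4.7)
The plan is to view both sides of the congruence as polynomials in $l$ and match coefficients mod $p$. From the generating function, $B_n^{(l)}$ is a polynomial in $l$ of degree $n$; expand it in the binomial basis as $B_n^{(l)} = \sum_{k=0}^{n} d_{n,k}\binom{l}{k}$, where $d_{n,k} = n!\,[t^n]\,(t/(e^t-1)-1)^k$. On the right, using (4.9) together with Vandermonde's identity, write $(-1)^r\binom{n+r-l}{r} = \binom{l-n-1}{r} = \sum_{k=0}^{r}(-1)^{r-k}\binom{n+r-k}{r-k}\binom{l}{k}$. The proposition then reduces to the coefficient-wise congruence
\[
(-1)^n p^r\,d_{n,k}/n! \equiv (-1)^{r-k}\binom{n+r-k}{r-k} \pmod{p}
\]
for every $0 \le k \le n$, understanding the RHS as $0$ when $k > r$.

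First I would dispose of the range $k > r$. Here the full pole-structure analysis of $B_n^{(l)}(x)$ from [1, 4], which is sharper than the aggregate estimate of Lemma 5.1, should bound $\nu(d_{n,k})$ strictly above the threshold needed to contribute, so that $\nu(p^r d_{n,k}/n!) > 0$ and each such coefficient is $\equiv 0$ mod $p$ automatically. This is essentially the Newton-polygon statement that the maximum pole of $B_n^{(l)}$ appears only in codegrees $\le r$. For $0 \le k \le r$, I would extract $d_{n,k}$ modulo the appropriate power of $p$ by analyzing the leading $p$-adic terms of $(t/(e^t-1)-1)^k$, then use the elementary congruences (4.5)--(4.8) to recognize the residue as $(-1)^{r-k}\binom{n+r-k}{r-k}$ up to the correct sign and factorial normalization.

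The main obstacle will be this second step: pinning down the exact mod-$p$ value of $d_{n,k}$ in the max-pole range $0 \le k \le r$, since Lemma 5.1 only provides a valuation bound rather than a residue. To go further I would invoke the ``first-occurrence'' description of the pole pattern from [4] together with the explicit formula for the leading coefficient at each pole; once these residues are in hand, the Vandermonde expansion of the right-hand side lines up with the basis expansion of the left-hand side termwise, and the proposition follows. A useful sanity check along the way is to evaluate at negative integer $l = -k$, where the identity $B_n^{(-k)} = n!\,k!\,S(n+k,k)/(n+k)!$ converts the congruence into a Stirling-number statement amenable to direct verification via Lucas and Anton.
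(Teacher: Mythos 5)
First, note that the paper does not prove this proposition at all: it is quoted verbatim as the special case $p-1\mid n$ of [3, Th.~1], so the ``paper's own proof'' is a citation to earlier work. Your setup is nevertheless a reasonable one: expanding $B_n^{(l)}=\sum_k d_{n,k}\binom{l}{k}$ with $d_{n,k}=n!\,[t^n]\bigl(t/(e^t-1)-1\bigr)^k$ is the standard finite-difference decomposition, the Vandermonde expansion of $\binom{l-n-1}{r}$ is correct, and the resulting coefficient-wise congruence does imply the proposition (and checks out numerically in small cases). The problem is that the entire content of the proposition now lives in the congruence
\[
(-1)^n p^r d_{n,k}/n! \equiv (-1)^{r-k}\binom{n+r-k}{r-k} \pmod{p},
\]
and you do not prove it; you explicitly defer it to an ``analysis of the leading $p$-adic terms of $(t/(e^t-1)-1)^k$'' that is never carried out. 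That analysis is exactly where the work of [3] lies, so the proposal is an outline with the essential step missing, not a proof.

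There is also a concrete misapplication in your treatment of the range $k>r$. The pole-pattern and Newton-polygon results of [1, 4] that you invoke describe the coefficients of $B_n^{(l)}(x)$ \emph{as a polynomial in $x$}: the ``codegrees'' in the paper's Section 5 are codegrees in $x$, and the statement that first occurrences of poles appear where $p-1\mid i$ and $p\nmid\binom{n}{i}$ refers to that $x$-expansion. The quantities $d_{n,k}$ are coefficients of the expansion of the \emph{number} $B_n^{(l)}$ in the variable $l$, a completely different basis, and the cited machinery does not transfer to them without a separate argument. (The needed bound $\nu(d_{n,k})>\nu(n!)-r$ for $k>r$ is plausible --- e.g.\ $d_{n,n}=n!(-1/2)^n$ satisfies it --- but it must be established directly from the power series, not borrowed from the $x$-variable pole structure.) To repair the proposal you would either have to carry out the $p$-adic analysis of $[t^n]\bigl(t/(e^t-1)-1\bigr)^k$ in both ranges of $k$, or simply cite [3, Th.~1] as the paper does.
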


Note that since $p-1|n$, we can omit the factor $(-1)^n$ from the preceding 
congruence.

We introduced the concept of maximum pole in [4] for $B_n^{(l)}(x)$ by
\begin{align}
\nu_p(B_n^{(l)}) = -\sigma(n)/(p-1),
\end{align}
which is the theoretical minimum value and obviously is only attainable if $p-1|n$. 
Observe that if $p-1 |n$, this is equivalent to sharpness of the estimate in Lemma 5.1.
In the maximum pole case, $B_n^{(l)}(x)$ has a pole if $n>0$, which is the biggest
pole for all the coefficients of $B_n^{(l)}(x)$. This occurs when the Newton polynomial
of $B_n^{(l)}(x)$ is strictly decreasing.
By the preceding analysis, there is a maximum pole iff
\begin{align}
r= n/(p-1) \in \mathbb{N} \quad \text{and} \quad p\nmid \binom{l-n-1}{r} .
\end{align}
In the maximum pole case, we have the nontrivial congruences 
\begin{align}
p^rB_n^{(l)}/n! \equiv (-1)^n\binom{l-n-1}{r} \equiv (-1)^r\binom{n-l+r}{r}
 \mod p .
\end{align}

\pagebreak

\centerline{ACKNOWLEDGEMENTS}
~
The author thanks E. Herman for his help in preparing this manuscript. The
author also thanks T. Lengyel for his generous advice and encouragement. \\
\\
\centerline{REFERENCES}
~
1. A. Adelberg, On the degrees of irreducible factors of higher order Bernoulli 
polynomials, \textit{Acta Arith.} \textbf{62} (1992), 329-342.\\
2. A. Adelberg, A finite difference approach to degenerate Bernoulli and
Stirling polynomials, \textit{Discrete Math.} \textbf{140} (1995), 1-21.\\
3. A. Adelberg, Congruences of $p$-adic integer order Bernoulli numbers,
\textit{J. Number Theory} \textbf{59} No. 2 (1996), 374-388.\\
4. A. Adelberg, Higher order Bernoulli polynomials and Newton polygons,
G. E. Bergum et al (eds.), \textit{Applications of Fibonacci Numbers} \textbf{7}
(1998), 1-8.\\
5. T. Amdeberhan, D. Manna and V. Moll, The $2$-adic valuation of Stirling
numbers, \textit{Experimental Math.} \textbf{17} (2008), 69-82.\\
6. O-Y. Chan and D. Manna, Divisibility properties of Stirling numbers of the
second kind, Proceedings of the Conference on Experimental Math., T. 
Amdeberhan, L. A. Medina, and V. Moll eds., \textit{Experimental Math.}
(2009).\\
7. S. De Wannemacker, On $2$-adic orders of Stirling numbers of the second
kind, \textit{Integers Electronic Journal of Combinatorial Number Theory},
\textbf{5} (1) (2005), A21, 7 pp. (electronic).\\
8. S. Hong J. Zhao, and W. Zhao, The $2$-adic valuations of Stirling numbers of
the second kind, \textit{Int. J. Number Theory} \textbf{8} (2012), 1057-1066.\\
9. S. Hong, J. Zhao, and W. Zhao, Divisibility by $2$ of Stirling numbers of the
second kind and their differences, \textit{J. Number Theory} \textbf{140}
(2014), 324-348.\\
10. S. Hong, J. Zhao, and W. Zhao, The $2$-adic valuations of differences of
Stirling numbers of the second kind, \textit{J. Number Theory} \textbf{153}
(2015), 309-320.\\
11. T. Komatsu and P. T. Young, Exact $p$-adic valuations of Stirling numbers
of the first kind, \textit{J. Number Theory} \textbf{177} (2017), 20-27.\\ 
12. T. Lengyel, On the divisibility by $2$ of the Stirling numbers of the second
kind, \textit{Fibonacci Quart.} \textbf{32} (3) (1994), 194-201.\\
13. T. Lengyel, Alternative proofs on the $2$-adic order of Stirling numbers
of the second kind, \textit{Integers} \textbf{10} (2010), A38, 453-468.\\
14. T. Lengyel, On $p$-adic properties of the Stirling numbers of the first kind,
\textit{J. Number Theory} \textbf{148} (2015), 73-94.\\
15. P. Leonetti and C. Sanna, On the $p$-adic valuation of Stirling numbers of
the first kind, \textit{Acta Math. Hungar.} \textbf{151} (2017), 217-231.

\end{document}